\documentclass[11pt,a4paper,draft]{amsart}
\usepackage{amsfonts,amsmath,amssymb,amsthm}
\usepackage{times}
\usepackage{color}
\usepackage{pstricks}

\numberwithin{equation}{section}

\DeclareSymbolFont{SY}{U}{psy}{m}{n}
\DeclareMathSymbol{\emptyset}{\mathord}{SY}{'306}

\DeclareMathOperator{\Ran}{Ran} 
 \DeclareMathOperator{\Dom}{Dom}

\DeclareMathOperator{\spec}{spec}

\DeclareMathSymbol{\newtimes}{\mathbin}{SY}{'264}

\newcommand{\eps}{\varepsilon}
\newcommand{\norm}[1]{\|#1\|}

\newcommand{\abs}[1]{|#1|}

\newcommand{\dist}{\mathrm{dist}}

\newcommand{\R}{\mathbb{R}}

\newcommand{\C}{\mathbb{C}}
\newcommand{\Z}{\mathbb{Z}}
\newcommand{\N}{\mathbb{N}}

\newcommand{\EE}{\mathsf{E}}

\newcommand{\ft}{\mathfrak{t}}
\newcommand{\fs}{\mathfrak{s}}

\newcommand{\fx}{\mathfrak{x}}

\newcommand{\cG}{{\mathcal G}}
\newcommand{\cH}{{\mathcal H}}

\newcommand{\cL}{{\mathcal L}}

\newcommand{\cO}{{\mathcal O}}
\newcommand{\cP}{{\mathcal P}}

\newcommand{\cU}{{\mathcal U}}
\newcommand{\cV}{{\mathcal V}}

\newcommand{\cW}{{\mathcal W}}

\newcommand{\ii}{\mathrm{i}}

\marginparwidth 20mm \addtolength{\textheight}{10mm}
\addtolength{\textwidth}{20mm} \addtolength{\topmargin}{-10mm}

\oddsidemargin 10mm \evensidemargin 10mm

\setcounter{totalnumber}{3}

\newtheorem{theorem}{Theorem}[section]{\bf}{\it}
\newtheorem{proposition}[theorem]{Proposition}{\bf}{\it}
\newtheorem{corollary}[theorem]{Corollary}{\bf}{\it}
{\it}{\rm}
\newtheorem{lemma}[theorem]{Lemma}{\bf}{\it}
\newtheorem{remark}[theorem]{Remark}{\it}{\rm}
\newtheorem{definition}[theorem]{Definition}{\bf}{\it}
{\bf}{\it}
{\bf}{\it}
{\bf}{\it}
\newtheorem{hypothesis}[theorem]{Hypothesis}{\bf}{\it}


\title[Orthogonal Projections and their applications]{Metric properties of the set of orthogonal projections and their applications to operator perturbation theory}

\author[K.\ A.\ Makarov]{Konstantin A.\ Makarov}
\address{K.~A.~Makarov, Department of Mathematics, University of Missouri, Columbia, MO 65211, USA}
\email{makarovk@missouri.edu}

\author[A.\ Seelmann]{Albrecht Seelmann}
\address{A.~Seelmann, FB 08 - Institut f\"{u}r Mathematik,
Johannes Gutenberg-Universit\"{a}t Mainz,
Staudinger Weg 9,
D-55099 Mainz,
Germany}
\email{seelmann@mathematik.uni-mainz.de}



\date{\today}

\copyrightinfo{2009}{A.~Seelmann}



\begin{document}

\begin{abstract}
We prove that the set of orthogonal projections on a Hilbert space
equipped with the length metric is $\frac\pi2$-geodesic. As an
application, we consider the problem of variation of spectral
subspaces for bounded linear self-adjoint operators and obtain a new
estimate on the norm of the difference of two spectral projections
associated with isolated parts of the spectrum of the perturbed and
unpertubed operators, respectively. In particular, recent results by
Kostrykin, Makarov and Motovilov from [Trans.\ Amer.\ Math.\ Soc.,
V. 359, No. 1, 77 -- 89] and [Proc.\ Amer.\ Math.\ Soc., 131, 3469
-- 3476] are sharpened.
\end{abstract}

\maketitle

\section{Introduction}

 The main purpose of this paper is to study metric properties of
the (noncommutative) space $\cP$ of orthogonal projections acting in
a separable  Hilbert space $\cH$ with the emphasis on applications
to the spectral perturbation theory. On the metric space $(\cP, d)$,
where $d$ is the metric introduced by the norm in the space
$\cL(\cH)$ of bounded operators on $\cH$,
$$
d(P,Q)=\norm{P-Q}, \quad P,Q\in \cP,
$$
we introduce the length metric $\rho$, so that the space $(\cP,
\rho)$ becomes a length space, with the distance $\rho$ between two
points  defined as the infimum of the lengths of the paths that join
them.

One of our principle results regarding the global geometry of the space of projections $\cP$ is that the length space $(\cP, \rho)$
is $\frac\pi2$-geodesic. That   means that any two projections $P,Q\in \cP$ with $\rho(P,Q)<\frac\pi2$ can be connected by a geodesic path of length $l=\rho(P,Q)$. Recall that a path $\gamma:[a,b]\to \cP$ is called a geodesic if
$$
\rho(\gamma(t),\gamma(s))=|t-s|,\quad t,s\in[a,b].
$$
 In particular, we prove that the collection of the open unit balls in $(\cP, d)$ coincides with the one of the open balls
of radius $\frac\pi2$ in the length space $(\cP,\rho)$, that is,
$$
\norm{P-Q}<1 \quad \text{iff}\quad \rho(P,Q)<\frac\pi2 \quad \text{for }\quad \quad P,Q\in \cP.
$$

The pairs $(P,Q)$ of orthogonal projections with $\norm{P-Q}<1$ are of special interest.
For instance, such $P$ and $Q$ are unitarily equivalent. Moreover, $\Ran Q$ is a graph subspace of a bounded operator $X: \Ran P \to \Ran P^\perp$ and hence the relative geometry of the subspaces $\Ran P$ and $\Ran Q$
can efficiently be studied by  using standard tools of the geometric  perturbation theory.
The key role in our study of the relative geometry of the  graph subspaces $ \Ran P$ and $\Ran Q$ with $ \rho(P,Q)<\frac\pi2 $ is played by the  operator angle  $\Theta$, a  self-adjoint operator that can be introduced via the operator $X$
by the functional calculus
$$
\Theta=\arctan (X^*X)^{1/2}.
$$
Using the concept of the  operator angle we  show  that the length metric $\rho$ is
locally characterized by the norm of $\Theta$:
$$
\rho(P,Q)=\norm{\Theta}
\quad\text{if}\quad \rho(P,Q)<\frac\pi2\,.
$$
 Using  the characterization of the length metric as the infimum of the arc lengths and the well known relation $\norm{\Theta}=\arcsin \norm{P-Q}$,
we prove the following sharp  inequality
$$
\arcsin \norm{P-Q}\le \int_a^b\norm{\dot \gamma(t)}dt
$$
relating the norm of the difference of orthogonal projections and the arc length of a smooth path $\gamma:[a,b]\to\cP$ joining them.

As the first application of our geometric study of the space $\cP$
to the spectral perturbation theory, we consider a smooth
self-adjoint path of bounded operators $B_t$ each having two
disjoint spectral components. Given that the two families
$\{\omega_t\}_{t\in  I}$ and $\{\Omega_t\}_{t\in  I}$
 of spectral
components depend upper semicontinuously on the parameter, we prove
the following inequality
\[
 \arcsin(\norm{P_t-P_0}) \le \frac\pi2 \int_0^t \frac{\dot B_\tau}{\dist(\omega_\tau,\Omega_\tau)}\, d\tau\,,\quad t\in I\,,
\]
where $P_t$ denotes the spectral projection for $B_t$ associated
with the  spectral component $\{\omega_t\}_{t\in  I}$.

As an immediate consequence, we obtain  new estimates in the
subspace perturbation problem recently considered in   \cite{Kos1}
and \cite{Kos3}.

The paper is organized as follows.

In Section 2 we  start with  recalling basic facts on orthogonal
projections and prove an  important technical  result (see,
Corollary \ref{tan},  The Four Projections Lemma).

In Section \ref{sec:smthPaths} we deal with  smooth paths of
projections. As a key result we relate the norm of the difference of
the two endpoints of a smooth path and the corresponding arc length
(see, Lemma \ref{lem:resPiecSmth}, the Arcsine Law for smooth
paths).

In Section \ref{sec:geodMetrSp} we provide a characterization of the
local geodesic structure of the length space $(\cP,\rho)$ and prove
that the metric space  $(\cP,\rho)$ is $\pi/2$-geodesic. In
particular, we generalize the Arcsine Law  from Section
\ref{sec:smthPaths} to the case of continuous paths.

In  Section \ref{sec:appl} we  apply the results from the preceding
sections to the problem of variation of spectral subspaces including
some discussions about  the optimality of the obtained estimates.

In Section \ref{sec:newEstsubPertProb} we obtain new estimates in
the subspace perturbation problem  sharpening recent results  from
\cite{Kos1} and  \cite{Kos3}.

\textbf{Acknowledgments. }
The authors are grateful to Vadim Kostrykin for stimulating discussions.
A.S. would like to express his gratitude to his scientific advisor Vadim Kostrykin
for introducing him to the field of the research and continuous support.
K.A.M.\ is indebted to the Institute for Mathematics for its kind hospitality during his stay at the
Johannes Gutenberg-Universit\"at Mainz in the Summer of 2009 and 2010. The work of K.A.M.\ has been supported in part by the
Deutsche Forschungsgesellschaft and by the Inneruniversit\"aren Forschungsf\"orderung of the Johannes Gutenberg-Universit\"at Mainz.



\section{Preliminaries}\label{sec:Preliminaries}
We start with recalling some important
facts on the representation for the range of an orthogonal projection as a graph subspace associated with
the range of another orthogonal projection. For the proofs the reader is referred to the work
\cite{Kos2}.

Let $P$ and $Q$ be orthogonal projections in the Hilbert space $\cH$, where we will tacitly understand $\cH$
to be separable throughout this paper.
It is well known that the inequality $\|P-Q\|<1$ holds true if
and only if $\Ran Q$ is a graph of a bounded operator  $X\in\cL(\Ran P,\Ran P^\bot)$, $P^\perp:=I_\cH-P$, that is,
\begin{equation*}
    \Ran Q=\cG(X):=\cG(\Ran P,X):=\{x_0\oplus Xx_0 \mid x_0\in\Ran P\}\,.
\end{equation*}
In this case  the projection $Q$ has the following representation as a block operator matrix with
respect to the orthogonal decomposition $\cH=\Ran P\oplus\Ran P^\bot$:
\begin{equation}\label{eq:SpectralProjector}
    Q=\begin{pmatrix} (I_{\cH_0} + X^*X)^{-1} & (I_{\cH_0}+X^*X)^{-1}X^*\\ X(I_{\cH_0}+X^*X)^{-1} &
    X(I_{\cH_0}+X^*X)^{-1}X^* \end{pmatrix},
\end{equation}
where $\cH_0:=\Ran P$ (cf. Remark 3.6 in \cite{Kos2}).

The knowledge of the angular operator $X$ and/or  the operator angle $\Theta$ (see, e.g., \cite{Kos2} for a discussion of this concept)
between the subspaces $\Ran P$ and $\Ran Q$ given by
$$
 \Theta=\arctan \sqrt{X^*X},
$$provides complete information on relative geometry of the subspaces $\Ran P$ and $\Ran Q$. In particular,
\begin{equation}\label{eq:NormSol}
    \|X\|= \frac{\|P-Q\|}{\sqrt{1-\|P-Q\|^2}}=\tan \|\Theta\|
\end{equation}
and
\begin{equation}\label{eq:NormProjDiff}
    \|P-Q\| = \frac{\|X\|}{\sqrt{1+\|X\|^2}}=\sin \|\Theta\|
\end{equation}
(see, e.g.,  Corollary 3.4 in \cite{Kos2}).

Moreover, in this case, the orthogonal projections $P$ and $Q$ are unitarily equivalent. In particular,
\begin{equation*}
P=U^*QU,
\end{equation*}
where $U$ is given by the following  unitary block operator matrix
\begin{equation}\label{eq:UnitaryTransform}
    U=\begin{pmatrix}
        (I_{\cH_0} + X^*X)^{-1/2} & -X^*(I_{\cH_1} + XX^*)^{-1/2}\\
            X(I_{\cH_0} + X^*X)^{-1/2} & (I_{\cH_1} + XX^*)^{-1/2}
      \end{pmatrix},\quad \cH_1:=\Ran P^\perp\,.
\end{equation}

Our next result is a purely algebraic observation  the proof of
which  requires nothing but straightforward multiplication of
several operator matrices and hence will be omitted.


\begin{lemma}[Four projections lemma]\label{tri} Assume that $P$, $Q_1$, and $Q_2$  are orthogonal projections such that
$$\|P-Q_j\|<1, \quad j=1,2,
$$
and therefore
$$
\Ran Q_j=\cG (X_j), \quad j=1,2,
$$
for some angular operators $X_j\in \cL(\Ran P, \Ran P^\perp)$. Let $U_1$ be
the corresponding unitary operator from \eqref{eq:UnitaryTransform} such that $P=U_1^*Q_1U_1$, that is
$$U_1=\begin{pmatrix}
        (I_{\cH_0} + X_1^*X_1)^{-1/2} & -X_1^*(I_{\cH_1} + X_1X_1^*)^{-1/2}\\
            X_1(I_{\cH_0} + X_1^*X_1)^{-1/2} & (I_{\cH_1} + X_1X_1^*)^{-1/2}
      \end{pmatrix}
$$
with $\cH_0=\Ran P$ and $\cH_1=\Ran P^\perp$.
Then the orthogonal projection $Q$ given by
$$
Q=U_1^*Q_2U_1,
$$
admits the factorization
$$
Q=A^{-1/2}BCB^*A^{-1/2},
$$
where  $A\in \cL(\cH)$, $B\in \cL(\Ran P, \Ran P^\perp)$ and $C\in \cL(\Ran P)$ are $2\times 2$, $2\times 1$ and $1\times 1$  block operator matrices  (with respect to the orthogonal decomposition $\cH=\Ran P\oplus \Ran P^\perp$) respectively,
given by
\begin{align}
A&=\begin{pmatrix}I_{\cH_0}+X_1^*X_1&0\\
0&I_{\cH_1}+X_1X_1^*
\end{pmatrix}\label{a},
\\
B&=\begin{pmatrix}
I_{\cH_0}+X_1^*X_2\\
X_2-X_1
\end{pmatrix},\label{b}
\\
C&=(I_{\cH_0}+X_2^*X_2)^{-1}\label{c}.
\end{align}
\end{lemma}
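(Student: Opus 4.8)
The statement is a purely algebraic identity between block operator matrices, so the plan is to bring both $Q_2$ and $U_1$ into convenient factored forms and then multiply. First I would record a rank-type factorization of $Q_2$: reading off \eqref{eq:SpectralProjector} with $X=X_2$ and writing $C=(I_{\cH_0}+X_2^*X_2)^{-1}$ as in \eqref{c}, one has
\[
Q_2 = M_2\,C\,M_2^*,\qquad M_2:=\begin{pmatrix} I_{\cH_0}\\ X_2\end{pmatrix}\in\cL(\Ran P,\cH),
\]
since $C$, $CX_2^*$, $X_2C$ and $X_2CX_2^*$ are exactly the four blocks of $Q_2$ in \eqref{eq:SpectralProjector}.

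Next I would factor the unitary $U_1$. Using the standard intertwining relation $X\,f(X^*X)=f(XX^*)\,X$ of the functional calculus (applied to $X=X_1$ with $f(t)=(1+t)^{-1/2}$), the matrix \eqref{eq:UnitaryTransform} rewrites as
\[
U_1 = A^{-1/2}V_1,\qquad V_1:=\begin{pmatrix} I_{\cH_0} & -X_1^*\\ X_1 & I_{\cH_1}\end{pmatrix},
\]
where $A^{-1/2}=\diag\bigl((I_{\cH_0}+X_1^*X_1)^{-1/2},(I_{\cH_1}+X_1X_1^*)^{-1/2}\bigr)$ is the self-adjoint block-diagonal square root of the operator $A$ from \eqref{a}; consequently $U_1^*=V_1^*A^{-1/2}$.

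Combining the two, $Q=U_1^*Q_2U_1=(U_1^*M_2)\,C\,(U_1^*M_2)^*$, so it remains to verify that $U_1^*M_2=A^{-1/2}B$ with $B$ as in \eqref{b}. I would write $U_1^*M_2 = V_1^*A^{-1/2}\begin{pmatrix}I_{\cH_0}\\ X_2\end{pmatrix}$, carry out the $2\times1$ product, and apply the intertwining relation once more — this time to pull $(I_{\cH_0}+X_1^*X_1)^{-1/2}$, respectively $(I_{\cH_1}+X_1X_1^*)^{-1/2}$, out to the left. The two components then collapse to $(I_{\cH_0}+X_1^*X_1)^{-1/2}(I_{\cH_0}+X_1^*X_2)$ and $(I_{\cH_1}+X_1X_1^*)^{-1/2}(X_2-X_1)$, i.e.\ precisely $A^{-1/2}B$. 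Hence $Q=A^{-1/2}BCB^*A^{-1/2}$, as claimed.

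\textbf{Main obstacle.} There is no conceptual difficulty: everything reduces to block-matrix arithmetic together with the identity $X f(X^*X)=f(XX^*)X$. The only real care needed is bookkeeping — keeping the domains and codomains of the rectangular blocks straight and invoking the intertwining relation in the correct direction, so that all occurrences of $(I_{\cH_0}+X_1^*X_1)^{-1/2}$ and $(I_{\cH_1}+X_1X_1^*)^{-1/2}$ assemble on the left into the single factor $A^{-1/2}$. This is presumably why the authors describe the proof as nothing but straightforward multiplication of operator matrices and omit it.
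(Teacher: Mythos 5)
Your proof is correct and amounts to exactly the "straightforward multiplication of several operator matrices" the paper says it omits; the factorization $Q_2=M_2CM_2^*$ with $M_2=\begin{pmatrix}I_{\cH_0}\\X_2\end{pmatrix}$ together with $U_1=A^{-1/2}V_1$ (which indeed equals $V_1A^{-1/2}$, both following from the intertwining relation $X_1f(X_1^*X_1)=f(X_1X_1^*)X_1$) is a clean way to organize that multiplication, and the resulting identity $U_1^*M_2=A^{-1/2}B$ yields the claim immediately.
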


The last statement  of this  preliminary section allows one to
compare the angular operators $X_1$ and $X_2$ associated with the
graph subspaces $\Ran Q_1$ and $\Ran Q_2$ referred to in Lemma
\ref{tri}. As a result,  one obtains the following ``angle
addition'' formula.


\begin{corollary}\label{tan} Suppose in addition to the assumptions of Lemma \ref{tri} that  the range of the orthogonal  projection $Q$
is a graph subspace with respect to the decomposition $\cH=\Ran P\oplus \Ran P^\perp=:\cH_0\oplus\cH_1$, and therefore
$$
\Ran Q= \cG(Z) \quad \text{ for some }\,\,\,Z\in \cL(\Ran P,\Ran P^\perp).
$$
Moreover, assume that the operator $I_{\cH_0}+X_2^*X_1\in \cL(\Ran P)$ is of full range, that is,
$$
\Ran ( I_{\cH_0}+X_2^*X_1)=\Ran P.
$$

Then
\begin{equation}\label{trig}
X_2-X_1=(I_{\cH_1}+X_1X_1^*)^{1/2}Z(I_{\cH_0}+X_1^*X_1)^{-1/2}(I_{\cH_0}+X_1^*X_2).
\end{equation}
\end{corollary}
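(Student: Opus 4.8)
The plan is to read off the block structure of $Q$ from the factorization $Q=A^{-1/2}BCB^*A^{-1/2}$ furnished by Lemma~\ref{tri} and then to combine it with the elementary fact that an orthogonal projection onto a graph subspace is determined by its first block column.

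First I would set $A^{-1/2}=\diag(A_0^{-1/2},A_1^{-1/2})$ with $A_0:=I_{\cH_0}+X_1^*X_1$ and $A_1:=I_{\cH_1}+X_1X_1^*$ (both $\ge I$, hence boundedly invertible), write $B=\begin{pmatrix}B_0\\ B_1\end{pmatrix}$ with $B_0:=I_{\cH_0}+X_1^*X_2$ and $B_1:=X_2-X_1$, and multiply out the three block operator matrices to obtain, with respect to $\cH=\cH_0\oplus\cH_1$,
$$
Q_{00}=A_0^{-1/2}B_0CB_0^*A_0^{-1/2},\qquad Q_{10}=A_1^{-1/2}B_1CB_0^*A_0^{-1/2}.
$$
Since $\Ran Q=\cG(Z)$ with respect to $\cH_0\oplus\cH_1$, applying \eqref{eq:SpectralProjector} to $Q$ (with $Z$ in the role of $X$) shows that $Q_{00}=(I_{\cH_0}+Z^*Z)^{-1}$ is boundedly invertible and that $Q_{10}=ZQ_{00}$, whence $Z=Q_{10}Q_{00}^{-1}$. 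Substituting the two block formulas and cancelling the invertible factors $A_0^{\pm1/2}$ yields
$$
Z=A_1^{-1/2}\,B_1\,CB_0^*(B_0CB_0^*)^{-1}\,A_0^{1/2}.
$$

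The crux is to identify $CB_0^*(B_0CB_0^*)^{-1}$ with $B_0^{-1}$, for which I need $B_0=I_{\cH_0}+X_1^*X_2$ to be boundedly invertible. This is where both remaining hypotheses enter. Invertibility of $Q_{00}$ forces $B_0CB_0^*$ to be boundedly invertible, which already makes $B_0$ surjective (its range contains that of $B_0CB_0^*$) and $B_0^*$ injective. The additional assumption $\Ran(I_{\cH_0}+X_2^*X_1)=\Ran P$ says that $B_0^*=I_{\cH_0}+X_2^*X_1$ is moreover surjective, hence that $B_0$ is injective as well; thus $B_0$ is boundedly invertible. Since $C$ is invertible, one may then cancel, $CB_0^*(B_0CB_0^*)^{-1}=CB_0^*(B_0^*)^{-1}C^{-1}B_0^{-1}=B_0^{-1}$, so that $Z=A_1^{-1/2}(X_2-X_1)B_0^{-1}A_0^{1/2}$, and multiplying by $A_1^{1/2}$ on the left and by $A_0^{-1/2}B_0$ on the right gives exactly the asserted identity~\eqref{trig}.

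I expect Step~3 — the invertibility argument — to be the only genuinely delicate point: one has to keep straight which of $B_0$ and $B_0^*$ is surjective and which injective, and check that the cancellation producing $B_0^{-1}$ is legitimate; this is precisely where the graph hypothesis on $Q$ and the full-range hypothesis on $I_{\cH_0}+X_2^*X_1$ are jointly used, and neither can be dropped. Everything else is routine block-matrix bookkeeping with Lemma~\ref{tri} and~\eqref{eq:SpectralProjector}.
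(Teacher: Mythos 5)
Your proof is correct and uses essentially the same ingredients as the paper's: the factorization $Q=A^{-1/2}BCB^*A^{-1/2}$ from Lemma~\ref{tri}, the block structure of $Q$ imposed by the graph hypothesis (via \eqref{eq:SpectralProjector}), and a cancellation made legitimate by the full-range hypothesis. The only variation is that you read off $Z=Q_{10}Q_{00}^{-1}$ and establish two-sided invertibility of $B_0=I_{\cH_0}+X_1^*X_2$ (correctly combining surjectivity of $B_0$ from the invertibility of $Q_{00}$ with surjectivity of $B_0^*$ from the full-range hypothesis), whereas the paper starts from the pointwise identity $P^\perp Q=ZPQ$ and cancels the merely surjective right factor $CB^*A^{-1/2}|_{\Ran P}$; both routes land on \eqref{pochti} and hence on \eqref{trig}.
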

\begin{proof} From the definition of the angular operator $Z$, i.e. $\Ran Q=\cG(Z)$, it follows that
\begin{equation}\label{r0}
P^\perp Q=Z P Q.
\end{equation}
Recall that
by Lemma \ref{tri},
$$
Q=A^{-1/2}BCB^*A^{-1/2},
$$
where the operators $A$, $B$, and $C$ are given by \eqref{a}-\eqref{c}.
In particular,
$$
CB^*A^{-1/2}|_{\Ran P}=(I_{\cH_0}+X_2^*X_2)^{-1}( I_{\cH_0}+X_2^*X_1)(I_{\cH_0}+X_1^*X_1)^{-1/2}\,.
$$
By hypothesis, the operator $(I_{\cH_0}+X_2^*X_1)$ is of full range, so is $CB^*A^{-1/2}|_{\Ran P}$.
Therefore,  \eqref{r0} implies the equality
$$
P^\perp A^{-1/2}B=Z P A^{-1/2}B.
$$

Taking into account representations \eqref{a} and \eqref{b}, one computes
\begin{equation}\label{r1}
P^\perp A^{-1/2}B=(I_{\cH_1}+X_1X_1^*)^{-1/2}(X_2-X_1)
\end{equation}
and
\begin{equation}\label{r2}
PA^{-1/2}B=(I_{\cH_0}+X_1^*X_1)^{-1/2}(I_{\cH_0}+X_1^*X_2).
\end{equation}
Combining \eqref{r0}, \eqref{r1},  and \eqref{r2}, one concludes that
\begin{equation}\label{pochti}
(I_{\cH_1}+X_1X_1^*)^{-1/2}(X_2-X_1)=Z(I_{\cH_0}+X_1^*X_1)^{-1/2}(I_{\cH_0}+X_1^*X_2)
\end{equation}
and the claim follows by multiplying both sides of \eqref{pochti} by the operator $(I_{\cH_1}+X_1X_1^*)^{1/2}$ from the left.
\end{proof}
\begin{remark}  Representation \eqref{trig} relating the angular operators $X_1$, $X_2$ and $Z$
is a non-commutative variant of  the ``angle addition'' formula
\begin{equation}\label{add}
\tan \Theta_2-\tan \Theta_1 = \tan ( \Theta_2- \Theta_1) \cdot \left(1+\tan \Theta_1\tan \Theta_2\right)\,.
\end{equation}
To ``justify'' this observation, consider an example of a space of dimension 2 and rank 1 orthogonal projections $Q_1$ and $Q_2$ whose ranges are lines of inclinations $\Theta_1$ and $\Theta_2$, respectively.
Then the operator $Q_2$ in the new coordinate system
\begin{align*}
x'&=\cos \Theta_1x +\sin \Theta_1 y\\
y'&=-\sin \Theta_1x +\cos \Theta_1 y
\end{align*}
turns out to be a rank 1 projection  $Q$ whose range is a line of the slope $\tan(\Theta_2-\Theta_1)$.
Since the angular operators $X_1$, $X_2$, and $Z$  play the role of the slope of the line, in the case in question the angle addition formula \eqref{add} is equivalent to the relation \eqref{trig}.

\end{remark}


\section{Smooth Paths of projections}\label{sec:smthPaths}

Throughout this section we consider the set of orthogonal
projections $\cP$ in a Hilbert space $\cH$,
\[
 \cP = \{P\in\cL(\cH) \mid P=P^*=P^2\}\,,
\]
 as a metric space
with respect to the metric $d$ induced by the operator norm on
$\cL(\cH)$.

Recall, that a piecewise $C^1$-smooth path is a mapping $\gamma\colon[a,b]\to\cP$ such that there is a partition $a=t_0<\dots<t_n=b$ and
$\gamma|_{[t_j,t_{j+1}]}$ is $C^1$-smooth for all $j\in\{0,\dots,n-1\}$. In particular, such paths are continuous.
\begin{hypothesis}\label{main}
Assume that $\gamma\colon [a,b]\to\cP$ is a piecewise $C^1$-smooth path of orthogonal projections.
Suppose, in addition, that $\gamma(t)$ for all $t\in[a,b]$ is a graph subspace associated with a bounded operator
with respect to the orthogonal decomposition $\cH=\Ran \gamma(a)
\oplus \Ran \gamma(a)^\perp$, that is
$$
\Ran \gamma(t)=\cG(X_t) \quad \text{ for some angular operator } \,\,X_t\in\cL(\Ran \gamma(a), \Ran \gamma(a)^\perp),\quad t\in [a,b].
$$
\end{hypothesis}


Our first result in this section shows, that smoothness of the path
of projections implies smoothness of the corresponding angular
operators in the graph subspace representation. The exact statement
is as follows.

\begin{lemma}\label{lem:graphOpSmooth}
Assume Hypothesis \ref{main}.
If $I\subset[a,b]$ is an interval, such that $\gamma|_I$ is $C^1$-smooth, then
$I\ni t\mapsto X_t$ is also $C^1$-smooth.
In particular, the path $[a,b]\ni t \mapsto X_t$ is piecewise $C^1$-smooth.
\begin{proof}
    Let $\cH_0=\Ran \gamma(a)$ and $\cH_1=\Ran \gamma(a)^\bot$, where $\gamma(a)^\bot=I_\cH-\gamma(a)$.
    Introduce piecewise $C^1$-smooth  families of bounded  operators given by
     $$T _t:= \gamma(a)^\bot \gamma(t) \gamma(a)\quad \text{and} \quad S_t := \gamma(a) \gamma(t) \gamma(a) ,\quad t\in [a,b].$$
      Denote by $R_t$ the following operator matrix
    with respect to the decomposition $\cH=\cH_0\oplus \cH_1$,
\begin{equation}
R_t := \begin{pmatrix} I_{\cH_0}+X_t ^*X_t& 0\\ 0 & 0\end{pmatrix},\quad t\in [a,b].
\end{equation}

Using  \eqref{eq:SpectralProjector}, one obtains that for each $t\in [a,b]$
\begin{equation}
    \gamma(t)=\begin{pmatrix} (I_{\cH_0} + X_t^*X_t)^{-1} & (I_{\cH_0}+X_t^*X_t)^{-1}X_t^*\\ X_t(I_{\cH_0}+X_t^*X_t)^{-1} &
    X_t(I_{\cH_0}+X_t^*X_t)^{-1}X_t^* \end{pmatrix},
\end{equation}
and a simple computation shows that the operators $T_t$ and $S_t$ can be represented as the following operator matrices with respect to the decomposition $\cH=\cH_0\oplus \cH_1$:
    \[
        \begin{split}
            T_t &= \gamma(a)^\bot \gamma(t) \gamma(a) =
                \begin{pmatrix} 0 & 0\\ X_t \bigl(I_{\cH_0}+X_t ^*X_t \bigr)^{-1} & 0
                \end{pmatrix},\\[0.1cm]
            S_t &= \gamma(a) \gamma(t) \gamma(a) =
                \begin{pmatrix} \bigl(I_{\cH_0}+X_t ^*X_t\bigr)^{-1} & 0\\ 0 & 0\end{pmatrix},
            \quad t\in [a,b]
        \end{split}
\]

    Now it is easy to see that the ``resolvent identity"
    \begin{equation}\label{rrss}
        R_s- R_t= R_t \left (S_t - S_s\right ) R_s\,,\quad s,t\in [a,b].
    \end{equation}
    holds.

        The norm estimate
    $$
    \|R_t\|\le 1+\|X_t\|^2
    $$
    combined with the identity (see \eqref{eq:NormSol})
    $$
    \|X_t\|=\frac{\|\gamma(t)-\gamma(a)\|}{\sqrt{1-\|\gamma(t)-\gamma(a)\|^2}}
    $$
yields the inequality
\begin{equation}\label{ozen}
    \|R_t\|\le\frac{1}{1-\|\gamma(t)-\gamma(a)\|^2}\,, \quad t\in [a,b].
    \end{equation}
Since by hypothesis $\Ran \gamma(t)$ is a graph of a bounded operator, one gets that
$\|\gamma(t)-\gamma(a)\|<1$ for all $t\in [a,b]$. Due to the continuity of the path  $[a,b] \ni t\mapsto \gamma(t)$, from \eqref{ozen} one concludes that for any $t_0\in [a,b] $ there exists a neighborhood $U_{t_0}$ of the point $t_0$   such that the function
    $U_{t_0}\ni t\mapsto \|R_t\|$ is uniformly bounded.
Taking this observation into account and recalling that the family $S_t$ is piecewise differentiable,
 from the representation \eqref{rrss} it follows that the family $R_t$ is also piecewise differentiable with
    \begin{equation}\label{diff}
    \dot    R_t=
        -R_t \dot S_t R_t\,, \quad t\in I\,,
    \end{equation}
    where $I\subset[a,b]$ is any interval such that $\gamma|_I$ is $C^1$-smooth.
    Since $I \ni t \mapsto \dot S_t$ is a continuous path, from \eqref{diff} it follows that
      $I \ni t\mapsto R_t$ is a $C^1$-smooth path. It remains to observe that
    \[
        \begin{pmatrix} 0 & 0\\ X_t & 0\end{pmatrix} -\begin{pmatrix} 0 & 0\\ X_s& 0
        \end{pmatrix}=T_t R_t - T_s R_s\,,\quad s,t\in I,
    \]
    to conclude that $I \ni t\mapsto X_t $ is a $C^1$-smooth path with
    \[
        \begin{pmatrix} 0 & 0\\ \dot X_t & 0\end{pmatrix} = \dot T_tR_t+ T_t \dot R_t\,, \quad t\in I.
    \]
\end{proof}
\end{lemma}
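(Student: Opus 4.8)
The plan is to reduce smoothness of $t\mapsto X_t$ to smoothness of two auxiliary operator families obtained from $\gamma(t)$ by compression, followed by a single inversion step. Write $\cH_0=\Ran\gamma(a)$ and $\cH_1=\Ran\gamma(a)^\perp$, and introduce $S_t:=\gamma(a)\gamma(t)\gamma(a)$ and $T_t:=\gamma(a)^\perp\gamma(t)\gamma(a)$. Both are $C^1$ on $I$ at once, since they arise from the $C^1$ map $t\mapsto\gamma(t)$ by left and right multiplication with the \emph{fixed} bounded operators $\gamma(a)$ and $\gamma(a)^\perp$. Reading off the block-matrix representation \eqref{eq:SpectralProjector} of $\gamma(t)$ with respect to $\cH=\cH_0\oplus\cH_1$, one sees that, as an operator on $\cH_0$, $S_t=(I_{\cH_0}+X_t^*X_t)^{-1}$, while $T_t$ has the single nonzero block $X_t(I_{\cH_0}+X_t^*X_t)^{-1}$. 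Hence, setting $R_t:=I_{\cH_0}+X_t^*X_t$ (i.e. the inverse of $S_t$ on $\cH_0$, extended by $0$ on $\cH_1$), one has $X_t=T_tR_t$ in the obvious block sense, and it suffices to prove that $t\mapsto R_t$ is $C^1$ on $I$.

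The inversion step is where the hypothesis that each $\gamma(t)$ be a graph subspace enters essentially. By the equivalence $\|P-Q\|<1\iff\Ran Q=\cG(X)$, that hypothesis forces $\|\gamma(t)-\gamma(a)\|<1$ for every $t\in[a,b]$; combined with \eqref{eq:NormSol} this gives $\|R_t\|\le 1+\|X_t\|^2=(1-\|\gamma(t)-\gamma(a)\|^2)^{-1}$, and since $t\mapsto\|\gamma(t)-\gamma(a)\|$ is continuous and stays below $1$ on compact subintervals, the family $\{R_t\}$ is locally uniformly bounded in norm. I would then use the ``resolvent identity'' $R_s-R_t=R_t(S_t-S_s)R_s$ (merely the second-resolvent identity for the inverses of $S_t,S_s$ on $\cH_0$): local boundedness of $\|R_t\|$ and continuity of $t\mapsto S_t$ yield, via this identity, continuity of $t\mapsto R_t$; dividing the identity by $s-t$ and letting $s\to t$ (using differentiability of $S_t$ and the just-established continuity $R_s\to R_t$) shows that $R_t$ is differentiable on $I$ with $\dot R_t=-R_t\dot S_tR_t$. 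As $t\mapsto\dot S_t$ is continuous and $t\mapsto R_t$ is now continuous, $\dot R_t$ is continuous, i.e. $R_t$ is $C^1$ on $I$.

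Finally, $X_t=T_tR_t$ is a product of $C^1$ operator-valued families, hence $C^1$, with $\dot X_t=\dot T_tR_t+T_t\dot R_t$ (read off in the $(2,1)$ block); applying this on each smooth piece of the partition underlying $\gamma$ then gives that $[a,b]\ni t\mapsto X_t$ is piecewise $C^1$. The one genuinely non-routine point is the inversion step: smoothness of $S_t$ alone does not propagate to $S_t^{-1}=R_t$ without a \emph{local} bound on $\|R_t\|$, and it is exactly the graph-subspace condition (through $\|\gamma(t)-\gamma(a)\|<1$) that supplies this bound. Everything else is bookkeeping with \eqref{eq:SpectralProjector} and the standard fact that products, and inverses at invertible points, of $C^1$ Banach-space-operator-valued maps are again $C^1$.
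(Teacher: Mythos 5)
Your proposal is correct and follows essentially the same route as the paper's own proof: you introduce the identical auxiliary families $S_t$, $T_t$, and $R_t$, exploit the same ``resolvent identity'' $R_s-R_t=R_t(S_t-S_s)R_s$, derive the same local bound $\|R_t\|\le(1-\|\gamma(t)-\gamma(a)\|^2)^{-1}$ from the graph-subspace hypothesis, and recover $X_t=T_tR_t$ in block form. Nothing of substance differs, and your remark that the local bound on $\|R_t\|$ is the crux of the inversion step correctly isolates the one nontrivial point.
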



Our next result forms in fact the core of our considerations for it
relates the evolution of the path of angular operators and the
evolution of the corresponding path of orthogonal projections. It
justifies the following principle: The speed of rotation of the
subspaces $\Ran \gamma(t)$ along a path $[a,b]\ni t \to \gamma(t)$
does not exceed the speed on the path.

\begin{lemma}\label{mmss} Assume Hypothesis \ref{main}.
Let $I\subset[a,b]$ be an interval, such that $\gamma|_I$ is $C^1$-smooth.
Then the estimate
\begin{equation}\label{mosa}
\|\dot X_t\|\le \left(1+\|X_t\|^2\right)\|\dot \gamma(t)\|
\end{equation}
holds for all $t\in I$.
\end{lemma}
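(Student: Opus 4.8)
The plan is to establish \eqref{mosa} at each point $s\in I$ separately, and to do so by shifting the base point of the graph representation from the fixed projection $\gamma(a)$ to the ``moving'' projection $\gamma(s)$ for $\tau$ near $s$. The point is that the angular operator of $\gamma(\tau)$ with respect to $\gamma(s)$ vanishes at $\tau=s$, so its derivative there is controlled by $\norm{\dot\gamma(s)}$ with the constant $1$; the ``angle addition'' formula of Corollary~\ref{tan} then converts this into the desired bound for $\dot X_s$, picking up exactly the factor $1+\norm{X_s}^2$.

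Fix $s\in I$ and set $\cH_0=\Ran\gamma(a)$, $\cH_1=\Ran\gamma(a)^\perp$. By continuity of $\gamma$ choose a subinterval $J\subseteq I$ with $s\in J$ and $\norm{\gamma(\tau)-\gamma(s)}<1$ for all $\tau\in J$, so that $\Ran\gamma(\tau)=\cG(\Ran\gamma(s),Y_\tau)$ for some $Y_\tau\in\cL(\Ran\gamma(s),\Ran\gamma(s)^\perp)$ with $Y_s=0$; by the argument of Lemma~\ref{lem:graphOpSmooth} (with $\gamma(s)$ in the role of $\gamma(a)$), the map $\tau\mapsto Y_\tau$ is $C^1$ on $J$. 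For each fixed $w\in\Ran\gamma(s)$ the vector $w+Y_\tau w$ lies in $\Ran\gamma(\tau)$, hence $\gamma(\tau)(w+Y_\tau w)=w+Y_\tau w$ for all $\tau\in J$; differentiating this identity at $\tau=s$, using $Y_s=0$ and $\dot Y_sw\in\Ran\gamma(s)^\perp$ (so that $\gamma(s)\dot Y_sw=0$), one obtains $\dot\gamma(s)w=\dot Y_sw$. Thus $\dot Y_s=\dot\gamma(s)\big|_{\Ran\gamma(s)}$, and in particular $\norm{\dot Y_s}\le\norm{\dot\gamma(s)}$.

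Next I would apply Lemma~\ref{tri} and Corollary~\ref{tan} with $P=\gamma(a)$, $Q_1=\gamma(s)$ (so $X_1=X_s$) and $Q_2=\gamma(\tau)$ (so $X_2=X_\tau$), which is legitimate since $\norm{\gamma(a)-\gamma(s)}<1$ and $\norm{\gamma(a)-\gamma(\tau)}<1$ by Hypothesis~\ref{main}. The unitary $U_1$ from \eqref{eq:UnitaryTransform} satisfies $U_1\gamma(a)U_1^*=\gamma(s)$, hence it maps $\cH_0$ isometrically onto $\Ran\gamma(s)$ and $\cH_1$ isometrically onto $\Ran\gamma(s)^\perp$. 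Consequently $Q:=U_1^*\gamma(\tau)U_1$ satisfies $\norm{P-Q}=\norm{\gamma(s)-\gamma(\tau)}<1$, so $\Ran Q=\cG(Z_\tau)$ for some $Z_\tau\in\cL(\cH_0,\cH_1)$, and from $\Ran\gamma(\tau)=\cG(\Ran\gamma(s),Y_\tau)$ one computes $Z_\tau=(U_1|_{\cH_1})^*\,Y_\tau\,(U_1|_{\cH_0})$; since the two flanking maps are fixed isometries, $Z_s=0$ and $\norm{\dot Z_s}=\norm{\dot Y_s}\le\norm{\dot\gamma(s)}$. Shrinking $J$ so that $I_{\cH_0}+X_\tau^*X_s$ stays invertible (it equals $I_{\cH_0}+X_s^*X_s\ge I_{\cH_0}$ at $\tau=s$, and $\tau\mapsto X_\tau$ is continuous by Lemma~\ref{lem:graphOpSmooth}), Corollary~\ref{tan} gives, for $\tau$ near $s$,
\[
X_\tau-X_s=(I_{\cH_1}+X_sX_s^*)^{1/2}\,Z_\tau\,(I_{\cH_0}+X_s^*X_s)^{-1/2}(I_{\cH_0}+X_s^*X_\tau)\,.
\]

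Finally I would differentiate this identity at $\tau=s$. Here $X_s$ and the factors $(I_{\cH_1}+X_sX_s^*)^{1/2}$ and $(I_{\cH_0}+X_s^*X_s)^{-1/2}$ are independent of $\tau$; the term carrying $\dot X_s$ on the right-hand side comes with the factor $Z_s=0$ and drops out; and $(I_{\cH_0}+X_s^*X_s)^{-1/2}(I_{\cH_0}+X_s^*X_\tau)\to(I_{\cH_0}+X_s^*X_s)^{1/2}$ as $\tau\to s$. Therefore
\[
\dot X_s=(I_{\cH_1}+X_sX_s^*)^{1/2}\,\dot Z_s\,(I_{\cH_0}+X_s^*X_s)^{1/2}\,,
\]
and taking norms, together with $\norm{I_{\cH_1}+X_sX_s^*}=\norm{I_{\cH_0}+X_s^*X_s}=1+\norm{X_s}^2$, yields $\norm{\dot X_s}\le(1+\norm{X_s}^2)\norm{\dot Z_s}\le(1+\norm{X_s}^2)\norm{\dot\gamma(s)}$. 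Since $s\in I$ was arbitrary, \eqref{mosa} follows. The step I expect to require the most care is the bookkeeping around $U_1$ in the third paragraph: verifying that $U_1$ genuinely intertwines the decompositions $\cH_0\oplus\cH_1$ and $\Ran\gamma(s)\oplus\Ran\gamma(s)^\perp$, and that $Z_\tau$ is precisely the isometric transport of $Y_\tau$. The conceptual obstacle is that a naive direct differentiation of the graph representation \eqref{eq:SpectralProjector} only delivers the weaker constant $(1+\norm{X_t}^2)^{3/2}$; routing the argument through the base point $\gamma(s)$, where the angular operator vanishes, is exactly what removes the extra factor $(1+\norm{X_t}^2)^{1/2}$.
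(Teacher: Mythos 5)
Your proof is correct and takes essentially the same approach as the paper's: fix $s$, pass to the angular operator relative to $\gamma(s)$ (which vanishes at $\tau=s$), and use Corollary~\ref{tan} to transport the resulting bound back to the base point $\gamma(a)$, picking up the factor $1+\norm{X_s}^2$. The only differences are minor bookkeeping: the paper forms $Q_t=U_t^*\gamma(s)U_t$ and computes the limit of $\norm{Y_t}/\abs{t-s}$ via the norm formula \eqref{eq:NormSol}, whereas you conjugate $\gamma(\tau)$ by the fixed unitary $U_s$ and differentiate the graph identity directly to get $\dot Z_s = \dot\gamma(s)\vert_{\Ran\gamma(s)}$ (transported to $\cH_0\to\cH_1$), which amounts to the same estimate.
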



\begin{proof} Since, by hypothesis, $\|\gamma(t)-\gamma(a)\|<1$, $t\in I$, the projections $\gamma(t)$ and $\gamma(a)$ are unitarily equivalent. In particular,
\begin{equation}\label{eq:unitEquiv}
\gamma(a)=U_t^*\gamma(t)U_t, \quad t\in I.
\end{equation}
where  the family of unitary operators $U_t$, $t\in I$, is given by \eqref{eq:UnitaryTransform} accordingly.

Fix an $s\in I$ and introduce
 the family of orthogonal projections
\begin{equation}\label{eq:famOrthoProj}
Q_t=U_t^*\gamma(s)U_t, \quad t\in I.
\end{equation}
 Due to the continuity of the path $ I\ni t\mapsto \gamma(t)$, there exists a neighborhood
$\cV\subset I$ of the point $s$, such that
\begin{equation}\label{eq:projLeOne}
\|\gamma(t)-\gamma(s)\| < 1, \quad t\in \cV.
\end{equation}
Since by \eqref{eq:unitEquiv} and \eqref{eq:famOrthoProj}
\begin{equation}\label{eq:altProjEst}
 \|Q_t-\gamma(a)\|= \|U_t^*\gamma(s)U_t-U_t^*\gamma(t)U_t\|=\|\gamma(s)-\gamma(t)\|\,,
\end{equation}
from \eqref{eq:projLeOne} it follows that
$$
\|Q_{t}-\gamma(a)\|<1\,, \quad t\in \cV\,.
$$
Therefore, $\Ran Q_{t}$ is a graph subspace with respect to the
decomposition $\cH=\Ran \gamma(a)\oplus \Ran \gamma(a)^\perp$, that
is
$$
\Ran Q_{t}=\cG(Y_{t})\quad\text{ for some}\quad Y_{t}\in \cL(\Ran \gamma(a), \Ran \gamma(a)^\perp)\,.
$$

Next, one observes that the operator $I_{\cH_0}+X_s^*X_s$ has a bounded inverse and therefore
$I_{\cH_0}+X_t^*X_s$ has a bounded inverse as well for all $t$ from   in a possibly  smaller neighborhood $\tilde \cV\subset \cV$ of the point $s$.
In particular, the operator $I_{\cH_0}+X_t^*X_s$ is of full range for all $t\in \tilde \cV$ and
one can apply Corollary  \ref{tan} to get the representation
$$
X_t-X_s=(I_{\cH_0}+X_sX_s^*)^{1/2}Y_t(I_{\cH_0}+X_s^*X_s)^{-1/2}(I_{\cH_0}+X_s^*X_t)\,,\quad t\in \tilde \cV,
$$
and hence
\begin{equation}\label{intan}
\|X_t-X_s\|\le \| (I+X_sX_s^*)^{1/2}\|\cdot \|Y_t\|
\cdot \|(I+X_s^*X_s)^{-1/2}(I+X_s^*X_t)\|\,,\quad t\in \tilde \cV\,.
\end{equation}
Since by \eqref{eq:NormSol}
$$
\|Y_t\|=\frac{\|Q_t-\gamma(a)\|}{\sqrt{1-\|Q_t-\gamma(a)\|^2}}
$$
from \eqref{eq:altProjEst}
one obtains that
$$
\lim_{t\to s}\frac{\|Y_t\|}{t-s}=\lim_{t\to s}\frac{\|\gamma(t)-\gamma(s)\|}{t-s}=\|\dot \gamma(s)\|,
$$
for $ I\ni t\mapsto \gamma(t)$ is a $C^1$-smooth path. Since by Lemma \ref{lem:graphOpSmooth}
$ I\ni t\mapsto X_t$ is also a $C^1$-smooth path, from inequality \eqref{intan} one gets the estimate
\begin{align*}
\|\dot X_s\|&\le \| (I+X_sX_s^*)^{1/2}\|\cdot \|\dot \gamma(s)\|
\cdot \|(I+X_s^*X_s)^{1/2}\|
\\
&=(1+\|X_s\|^2) \|\cdot \|\dot \gamma(s)\|.
\end{align*}
 Since  the reference  point $s\in I$ has been  chosen arbitrarily, one proves the inequality \eqref{mosa}.
\end{proof}

Using the information  about the evolution of the angular operators
provided by  Lemma \ref{mmss}, we are now able to estimate the
variation of the corresponding orthogonal projections.


\begin{lemma}[The Arcsine Law]\label{lem:resPiecSmth}
 Let $\gamma\colon[a,b]\to\cP$ be a piecewise $C^1$-smooth path. Then
    \begin{equation}\label{eq:resPiecwSmth}
     \arcsin(\norm{\gamma(b)-\gamma(a)}) \le l_R(\gamma)\,,
    \end{equation}
    where
    \[
     l_R(\gamma) = \int _a^b \norm{\dot\gamma(t)}\,d t
    \]
    is the Riemannian length of the path $\gamma$.


    \begin{proof}
     Since $\norm{\gamma(b)-\gamma(a)}\le 1$, we may assume $l_R(\gamma)<\frac{\pi}{2}$.

     Let $a=t_0<\dots <t_n=b$ be a partition such that $\gamma|_{[t_j,t_{j+1}]}$ is $C^1$-smooth.
     Set
     \begin{equation}\label{eq:critic}
        T := \sup\left\{t\in[a,b] \,\middle|\, \norm{\gamma(t')-\gamma(a)}<1\ \text{ for all }\ t'\in[a,t)\right\}\,.
     \end{equation}
     Clearly, $T>a$, for $\gamma$ is continuous. Since
     \[
        \norm{\gamma(t)-\gamma(a)} < 1 \quad \text{ for all }\ t\in [a,T)\,,
     \]
     the range of $\gamma(t)$ is a graph subspace with respect to the decomposition
     $\cH=\Ran \gamma(a)\oplus \Ran \gamma(a)^\perp$ and therefore
     \[
        \Ran \gamma(t)=\cG( X_t) \quad \text{ for some }\ X_t\in \cL( \Ran \gamma(a), \Ran \gamma(a)^\perp)\,, \quad
        t\in [a,T)\,.
     \]
        Due to Lemma \ref{mmss}, we have 
     \[
        \|\dot X_t\| \le ( 1+\|X_t\|^2)\|\dot \gamma(t)\|
     \]
     for all $t\in(t_j,t_{j+1})$, $j=0,\dots,n-1$, as long as $t<T$. For arbitrary $t\in[a,T)$ there is a unique
        $k\in\{0,\dots,n-1\}$
     such that $t\in[t_k,t_{k+1})$. We obtain
     \begin{equation}\label{eq:int}
      \begin{aligned}
         \norm{X_t}
         &= \norm{X_t-X_a} \le \norm{X_t-X_{t_k}} + \sum_{j=0}^{k-1}\norm{X_{t_{j+1}}-X_{t_j}}\\
         &\le \int_{t_k}^t \|\dot X_\tau\|\, d\tau + \sum_{j=0}^{k-1}\int_{t_j}^{t_{j+1}} \|\dot X_\tau\|\, d\tau\\
         &\le \int_{t_k}^t ( 1+\|X_\tau\|^2)\|\dot \gamma(\tau)\| \, d\tau + \sum_{j=0}^{k-1}\int_{t_j}^{t_{j+1}}
            ( 1+\|X_\tau\|^2)\|\dot \gamma(\tau)\| \, d\tau\\
            &= \int_a^t ( 1+\|X_\tau\|^2)\|\dot \gamma(\tau)\|\, d\tau
        \end{aligned}
     \end{equation}
     for $t\in[a,T)$. Denoting the right hand side of \eqref{eq:int} by $F(t)$, i.e.\
     \begin{equation}\label{eq:feq}
        F(t) = \int_0^t ( 1+\|X_\tau\|^2)\|\dot \gamma(\tau)\|\, d\tau\,,
     \end{equation}
     one concludes that
     \begin{equation}\label{eq:xt}
        \norm{X_t} \le F(t)\,,\ t\in[a,T)\,,
     \end{equation}
     and hence
     \begin{equation}\label{eq:difineq}
        F'(t) = ( 1+\|X_t\|^2)\|\dot \gamma(t)\| \le (1+F^2(t))\|\dot \gamma(t)\|
     \end{equation}
     for $t\in[a,T)$ except for the finitely many points $t_j$.
     Since by assumption $l_R(\gamma)<\frac\pi2$, one can solve the differential inequality \eqref{eq:difineq}
     on every sub-interval of $[a,T)$ where $F$ is $C^1$-smooth. For $t\in[t_k,t_{k+1}]$, $t<T$, one obtains
     \[
        \begin{aligned}
         \arctan F(t) &= \arctan F(t) - \arctan F(a)\\
         & =  \arctan F(t) - \arctan F(t_k) + \sum_{j=0}^{k-1} \bigl( \arctan F(t_{j+1})-\arctan F(t_j) \bigr)\\
         &\le \int_{t_k}^t \|\dot \gamma(\tau)\|\, d\tau + \sum_{j=0}^{k-1} \int_{t_j}^{t_{j+1}} \|\dot \gamma(\tau)\|\,
            d\tau = \int_{a}^{t} \|\dot \gamma(\tau)\|\, d\tau\,.
        \end{aligned}
     \]
     Together with \eqref{eq:xt} this yields the bound
     \begin{equation}\label{eq:atan}
        \arctan \|X_t\| \le \int_a^t \|\dot \gamma(\tau)\|\, d\tau\le l_R(\gamma)<\frac\pi2\,,\quad t\in[a,T)\,.
     \end{equation}
     Since
     \[
        \arcsin\bigl(\|\gamma(t)-\gamma(a)\|\bigr)= \arctan \left (\|X_t\|
        \right )\,,\quad t \in [a,T)\,,
     \]
     from \eqref{eq:atan} one gets the estimate
     \[
        \arcsin\bigl(\|\gamma(t)-\gamma(a)\|\bigr)\le \int_a^t\|\dot \gamma(\tau)\|\,d\tau\,,
        \quad t\in [a,T)\,,
     \]
     and hence, by continuity,
     \[
      \arcsin\bigl(\|\gamma(t)-\gamma(a)\|\bigr)\le \int_0^t\|\dot \gamma(\tau)\|\,d\tau\le l_R(\gamma)<\frac\pi2\,,
      \quad t\in [a,T]\,.
     \]
     In particular it is $T=b$ by definition of $T$ in \eqref{eq:critic}, which proves \eqref{eq:resPiecwSmth}.
    \end{proof}%
\end{lemma}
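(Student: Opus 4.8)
The plan is to reduce the global statement to a local one near the initial point $\gamma(a)$, and then to run a Gr\"onwall-type argument on the angular operator $X_t$ in the graph representation. First I would observe that the inequality is trivial unless $l_R(\gamma) < \frac{\pi}{2}$, since $\arcsin$ is only defined up to $\frac{\pi}{2}$ and $\norm{\gamma(b)-\gamma(a)}\le 1$ always; so I would assume $l_R(\gamma)<\frac{\pi}{2}$ from now on. The crucial quantity is the ``critical time''
\[
 T := \sup\bigl\{t\in[a,b] \,\bigm|\, \norm{\gamma(t')-\gamma(a)}<1\ \text{for all}\ t'\in[a,t)\bigr\}\,,
\]
which is strictly larger than $a$ by continuity of $\gamma$. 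On $[a,T)$ the condition $\norm{\gamma(t)-\gamma(a)}<1$ holds, so Hypothesis \ref{main} is satisfied on this interval and $\Ran\gamma(t)=\cG(X_t)$ for a piecewise $C^1$ family of angular operators $X_t$, with $X_a=0$.

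Next I would invoke Lemma \ref{mmss}, which gives $\norm{\dot X_t}\le (1+\norm{X_t}^2)\norm{\dot\gamma(t)}$ on each smooth piece. Integrating this over $[a,t]$ (splitting at the partition points $t_j$ where smoothness may fail, and using that $t\mapsto X_t$ is continuous and piecewise $C^1$ by Lemma \ref{lem:graphOpSmooth}) yields
\[
 \norm{X_t}=\norm{X_t-X_a}\le \int_a^t (1+\norm{X_\tau}^2)\norm{\dot\gamma(\tau)}\,d\tau =: F(t)\,,\qquad t\in[a,T)\,.
\]
Then $F$ is nonnegative, piecewise $C^1$, with $F'(t)=(1+\norm{X_t}^2)\norm{\dot\gamma(t)}\le (1+F(t)^2)\norm{\dot\gamma(t)}$ away from the finitely many $t_j$. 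Dividing by $1+F^2$ and integrating (again piecewise, telescoping across the $t_j$ using continuity of $\arctan\circ F$), one gets $\arctan F(t)\le \int_a^t\norm{\dot\gamma(\tau)}\,d\tau\le l_R(\gamma)<\frac{\pi}{2}$, hence by monotonicity of $\arctan$ and \eqref{eq:xt},
\[
 \arctan\norm{X_t}\le \int_a^t\norm{\dot\gamma(\tau)}\,d\tau\,,\qquad t\in[a,T)\,.
\]
Using the identity $\arcsin\norm{\gamma(t)-\gamma(a)}=\arctan\norm{X_t}$ from \eqref{eq:NormProjDiff}/\eqref{eq:NormSol}, this is exactly $\arcsin\norm{\gamma(t)-\gamma(a)}\le\int_a^t\norm{\dot\gamma(\tau)}\,d\tau$ on $[a,T)$.

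Finally I would close the bootstrap: by continuity the bound extends to $t=T$, giving $\arcsin\norm{\gamma(T)-\gamma(a)}\le l_R(\gamma)<\frac{\pi}{2}$, so in particular $\norm{\gamma(T)-\gamma(a)}<1$ strictly; by the definition of $T$ as a supremum this forces $T=b$ (otherwise the defining condition would persist a little past $T$, contradicting maximality). Evaluating the inequality at $t=b$ gives \eqref{eq:resPiecwSmth}. The main obstacle I anticipate is the bootstrap at the endpoint $T$: one must be careful that $T$ is attained and that the strict inequality $\norm{\gamma(T)-\gamma(a)}<1$ genuinely propagates — this is what prevents $T<b$ and is the only place where the assumption $l_R(\gamma)<\frac{\pi}{2}$ is essential (it keeps $\arctan F$ below $\frac{\pi}{2}$ so the graph representation never degenerates). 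The differential-inequality step is routine once one is disciplined about integrating only on the subintervals where $F$ is $C^1$ and telescoping the increments of $\arctan F$ across the partition points.
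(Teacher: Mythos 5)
Your proposal is correct and follows essentially the same route as the paper's own proof: the same critical-time $T$, the same appeal to Lemma \ref{lem:graphOpSmooth} and Lemma \ref{mmss} to obtain the differential inequality for $F$, the same Gr\"onwall/$\arctan$ integration, and the same bootstrap argument showing $T=b$. You even flag the same subtlety (propagating the strict inequality past $T$) that the paper's proof relies on implicitly.
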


The next lemma shows that the inequality of Lemma
\ref{lem:resPiecSmth} is sharp. In   particular,  it states that
given orthogonal projections $P$ and $Q$ with $\norm{P-Q}<1$, one
can construct a  $C^1$-smooth path of minimal length, a geodesic,
among all ($C^1$-smooth) paths connecting $P$ and $Q$. It will turn out
later, that this same path is of minimal lenght even among all continuous
paths connecting $P$ and $Q$.

\begin{lemma}\label{lem:smthConn}
 Let $P,Q\in\cP$ with $\norm{P-Q}<1$. Then there exists a $C^1$-smooth path $\gamma\colon[0,l]\to\cP$
    connecting $P$ and $Q$ such that
    \[
     \norm{\dot\gamma(t)} = 1\,,\ t\in[0,l]\,,
    \]
    where
    \[
     l = \arcsin(\norm{P-Q}) < \frac\pi2\,.
    \]
    \begin{proof}
     Since $\norm{P-Q}<1$, the range of $Q$ is a graph subspace with respect to
    $\Ran P$, i.e. $\Ran Q=\cG(\Ran P,X)$ for some $X\in\cL(\Ran P,\Ran P^\perp)$. Without loss of generality one can assume
        that the pair $(P,Q)$ is generic, that is,
        \[
         \Ran P \cap \Ran Q = \Ran P^\perp \cap \Ran Q^\perp = \{0\}
        \]
        and hence one can write (see \cite[Theorem 2.2]{Kos3})
        \[
         P = \begin{pmatrix} I_{\Ran P} & 0\\ 0 & 0\end{pmatrix},\quad
            Q = \cW^* \begin{pmatrix} \cos^2\Theta & \sin\Theta\cos\Theta\\ \sin\Theta\cos\Theta & \sin^2\Theta\end{pmatrix}\cW
        \]
        with respect to the decompostion $\cH=\Ran P \oplus \Ran P^\perp$, where $\Theta=\arctan\sqrt{X^*X}$ is the corresponding
        operator angle and $\cW$ is a unitary operator. In particular,
        \[
            l:=\norm{\Theta}=\arcsin(\norm{P-Q})<\frac\pi2\,.
        \]
        Introduce the $C^1$-smooth path $\gamma\colon[0,l]\to\cP$ connecting $P$ to $Q$ by the following family of block operator
        matrices with respect to the decomposition $\cH=\Ran P \oplus \Ran P^\perp$:
        \[
         \gamma(t) = \cW^* \begin{pmatrix} \cos^2(\Theta\frac tl) & \sin(\Theta \frac tl)\cos(\Theta\frac tl)\\ \sin(\Theta\frac tl)
            \cos(\Theta\frac tl)&
            \sin^2(\Theta\frac tl)\end{pmatrix}\cW\,,\quad t\in[0,l]\,.
        \]
        It remains to observe that
        \[
         \begin{aligned}
          \dot\gamma(t)
                &= \frac 1l \cW^* \begin{pmatrix} -\Theta\sin(2\Theta\frac tl) & \Theta\cos(2\Theta\frac tl)\\
                            \Theta\cos(2\Theta\frac tl) & \Theta\sin(2\Theta\frac tl)\end{pmatrix}\cW\\
                &= \frac 1l\cW^* \begin{pmatrix} \Theta^{1/2} & 0\\ 0 & \Theta^{1/2}\end{pmatrix} J
                            \begin{pmatrix} \Theta^{1/2} & 0\\ 0 & \Theta^{1/2}\end{pmatrix}\cW\,,\quad t\in[0,l]\,,
         \end{aligned}
        \]
        where $J$ is a self-adjoint involution, $J^2=I$, given by
        \[
         J = \begin{pmatrix} -\sin(2\Theta\frac tl) & \cos(2\Theta\frac tl)\\
                            \cos(2\Theta\frac tl) & \sin(2\Theta\frac tl)\end{pmatrix}\,,\ t\in[0,l]\,.
        \]
        Therefore,
        \[
         \norm{\dot\gamma(t)} = \frac{\norm{\Theta}}{l}=1\,,\ t\in[0,l]\,,
        \]
        which completes the proof.
    \end{proof}%
\end{lemma}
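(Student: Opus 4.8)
The plan is to exhibit an explicit constant-speed path built from the canonical form of a pair of projections at distance less than $1$, and to read off its speed from an involution factorization of the derivative. First I would reduce to the \emph{generic} case $\Ran P\cap\Ran Q=\Ran P^\perp\cap\Ran Q^\perp=\{0\}$. Setting $\cH_{00}:=\Ran P\cap\Ran Q$ and $\cH_{11}:=\Ran P^\perp\cap\Ran Q^\perp$, both subspaces reduce $P$ and $Q$, with $P=Q=I$ on $\cH_{00}$ and $P=Q=0$ on $\cH_{11}$; on $\cH':=\cH\ominus(\cH_{00}\oplus\cH_{11})$ the restricted pair $(P',Q')$ is generic and $\norm{P'-Q'}=\norm{P-Q}<1$. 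If $\gamma'$ is a $C^1$-path of projections of $\cH'$ of constant speed $1$ from $P'$ to $Q'$, defined on $[0,l]$ with $l=\arcsin\norm{P'-Q'}=\arcsin\norm{P-Q}$ (using \eqref{eq:NormProjDiff}), then $\gamma(t):=\gamma'(t)\oplus I_{\cH_{00}}\oplus 0_{\cH_{11}}$ connects $P$ to $Q$, lies in $\cP$, and has derivative $\dot\gamma'(t)\oplus 0\oplus 0$ of norm $1$. Hence we may assume $(P,Q)$ generic (the trivial case $P=Q$, $l=0$, aside).

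In the generic case I would invoke the canonical form \cite[Theorem 2.2]{Kos3}: there is a unitary $\cW$ (block-diagonal for $\cH=\Ran P\oplus\Ran P^\perp$, hence commuting with $P$) and the operator angle $\Theta=\arctan\sqrt{X^*X}$, with $\norm{\Theta}=\arcsin\norm{P-Q}=:l<\frac{\pi}{2}$, such that $P=\bigl(\begin{smallmatrix}I&0\\0&0\end{smallmatrix}\bigr)$ and $Q=\cW^*\bigl(\begin{smallmatrix}\cos^2\Theta&\sin\Theta\cos\Theta\\\sin\Theta\cos\Theta&\sin^2\Theta\end{smallmatrix}\bigr)\cW$. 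I then rescale the angle and set $\gamma(t):=\cW^*\bigl(\begin{smallmatrix}\cos^2(\frac{t}{l}\Theta)&\sin(\frac{t}{l}\Theta)\cos(\frac{t}{l}\Theta)\\\sin(\frac{t}{l}\Theta)\cos(\frac{t}{l}\Theta)&\sin^2(\frac{t}{l}\Theta)\end{smallmatrix}\bigr)\cW$ for $t\in[0,l]$; this is well defined since $\spec(\frac{t}{l}\Theta)\subset[0,l]\subset[0,\frac{\pi}{2})$. Using the functional calculus (the identities $\cos^2+\sin^2=I$ and commutativity of all functions of $\Theta$) one checks $\gamma(t)=\gamma(t)^*=\gamma(t)^2$, so $\gamma(t)\in\cP$, while $\gamma(0)=P$ and $\gamma(l)=Q$ are immediate.

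Finally I would differentiate. Since $t\mapsto\frac{t}{l}\Theta$ is linear and $\cos,\sin$ are entire, $t\mapsto\gamma(t)$ is $C^1$ (indeed $C^\infty$), and termwise differentiation together with the operator double-angle identities yields $\dot\gamma(t)=\frac{1}{l}\cW^*\bigl(\begin{smallmatrix}-\Theta\sin(\frac{2t}{l}\Theta)&\Theta\cos(\frac{2t}{l}\Theta)\\\Theta\cos(\frac{2t}{l}\Theta)&\Theta\sin(\frac{2t}{l}\Theta)\end{smallmatrix}\bigr)\cW=\frac{1}{l}\cW^*D\,J(t)\,D\,\cW$, where $D=\diag(\Theta^{1/2},\Theta^{1/2})$ and $J(t)=\bigl(\begin{smallmatrix}-\sin(\frac{2t}{l}\Theta)&\cos(\frac{2t}{l}\Theta)\\\cos(\frac{2t}{l}\Theta)&\sin(\frac{2t}{l}\Theta)\end{smallmatrix}\bigr)$. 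All entries of $D$ and of $J(t)$ are functions of $\Theta$, so $D$ commutes with $J(t)$; moreover $J(t)=J(t)^*$ and $J(t)^2=I$. Hence $D\,J(t)\,D=\diag(\Theta,\Theta)\,J(t)$, whose square is $\diag(\Theta^2,\Theta^2)$, so that $\norm{\dot\gamma(t)}^2=\frac{1}{l^2}\norm{D\,J(t)\,D}^2=\frac{1}{l^2}\norm{(D\,J(t)\,D)^2}=\frac{1}{l^2}\norm{\diag(\Theta^2,\Theta^2)}=\frac{\norm{\Theta}^2}{l^2}=1$ for every $t\in[0,l]$, which is the claim. I expect the main obstacles to be organizational rather than computational: making the reduction to the generic case airtight so that \cite[Theorem 2.2]{Kos3} applies verbatim, and justifying the norm-differentiability of the operator-valued trigonometric functions and the termwise differentiation of $\gamma$; once these are in hand, the involution structure of $J(t)$ makes the speed identity immediate.
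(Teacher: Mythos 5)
Your proposal is correct and follows essentially the same route as the paper: reduce to a generic pair, invoke the canonical form from \cite[Theorem 2.2]{Kos3}, rescale the operator angle $\Theta$ to build the path, and compute the constant speed from the involution factorization of the derivative. You merely fill in two details the paper leaves implicit --- the explicit $\cH_{00}\oplus\cH'\oplus\cH_{11}$ reduction to the generic case, and the observation that $\diag(\Theta^{1/2},\Theta^{1/2})$ commutes with $J(t)$ so that $\norm{\dot\gamma(t)}=\norm{\Theta}/l$.
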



\section{The space $\cP$ as a local geodesic metric space}\label{sec:geodMetrSp}

The main goal of this section is to study  metric properties of the
space $\cP$ considered as a length space.

Recall necessary definitions. Given a continuous path
$\gamma\colon[a,b]\to\cP$, its length $l(\gamma)$ is defined by
\[
 l(\gamma) = \sup\left\{ \sum_{j=0}^{n-1} \norm{\gamma(t_{j+1})-\gamma(t_j)}\,\middle|\, n\in\N,\,
     a=t_0 < \dots < t_n=b\right\}\,.
\]
Recall that a path is called rectifiable if its length is finite.

On $\cP$ introduce a length or inner (pseudo-)metric $\rho$ given by the formula
\[
 \rho(P,Q) = \text{infimum of length of rectifiable paths } \gamma \text{ from } P \text{ to } Q\,.
\]
If there are no such paths then set $\rho(P,Q)=\infty$.

It is well known that the pseudometric $\rho$ is actually a metric
and therefore $(\cP,\rho)$ is a well defined metric space (cf.,
\cite[Proposition 3.2]{Bri}, the length space.

There is another way of introducing the inner metric via a ``Riemannian'' arc length of piecewise differentiable paths $\gamma  :[a,b]\to \cP$
from $P$ to $Q$ by
\[
 \rho_R(P,Q) = \text{infimum of } \int_a^b \norm{\dot\gamma(t)}\, d t\,,
\]
and $\rho_R(P,Q)=\infty$ if there are no such paths.

The following lemma shows that the metric spaces $(\cP, \rho)$ and $(\cP, \rho_R)$ coincide.


\begin{lemma}\label{lem:uniApprox}
 Let $\gamma\colon[a,b]\to\cP$ be a continuous path. Then there is a sequence $(\gamma_n)$ of piecewise $C^1$-smooth paths
    $\gamma_n\colon[a,b]\to \cP$, each having the same endpoints as $\gamma$, such that $\gamma_n$ converges uniformly to
    $\gamma$ and its length $l(\gamma_n)$ converges to $l(\gamma)$. In particular, the inner metric $\rho$ and the Riemannian
    pseudometric $\rho_R$ on $\cP$ coincide.
\end{lemma}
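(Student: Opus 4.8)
The plan is to approximate $\gamma$ by ``geodesic polygons'': paths which on each piece of a fine partition coincide with a geodesic segment joining consecutive sample points, the segments being those furnished by Lemma~\ref{lem:smthConn} and playing the role of straight line segments (which are not available inside $\cP$). Concretely, I would fix a sequence of partitions $a=t_0^{(n)}<\dots<t_{k_n}^{(n)}=b$ with mesh $\delta_n\to0$; by uniform continuity of $\gamma$ we have $\norm{\gamma(t_{j+1}^{(n)})-\gamma(t_j^{(n)})}<1$ for all large $n$ and all $j$, so on each $[t_j^{(n)},t_{j+1}^{(n)}]$ one may insert an affine reparametrization (with constant speed depending on the piece) of the $C^1$-smooth geodesic from Lemma~\ref{lem:smthConn} joining $\gamma(t_j^{(n)})$ and $\gamma(t_{j+1}^{(n)})$. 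Since consecutive segments agree at the nodes, this yields a piecewise $C^1$ path $\gamma_n\colon[a,b]\to\cP$ with the same endpoints as $\gamma$ and with $\gamma_n(t_j^{(n)})=\gamma(t_j^{(n)})$, and by Lemma~\ref{lem:smthConn}, additivity of length under concatenation, and the fact that the length coincides with the Riemannian length for piecewise $C^1$ paths,
\[
 l(\gamma_n)=l_R(\gamma_n)=\sum_j \arcsin\norm{\gamma(t_{j+1}^{(n)})-\gamma(t_j^{(n)})}\,.
\]

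For the uniform convergence $\gamma_n\to\gamma$, I would observe that for $t$ in the $j$-th subinterval the Arcsine Law (Lemma~\ref{lem:resPiecSmth}), applied to $\gamma_n$ restricted to $[t_j^{(n)},t]$, gives $\norm{\gamma_n(t)-\gamma_n(t_j^{(n)})}\le\norm{\gamma(t_{j+1}^{(n)})-\gamma(t_j^{(n)})}$, which is small by uniform continuity; together with the smallness of $\norm{\gamma(t)-\gamma(t_j^{(n)})}$ and $\gamma_n(t_j^{(n)})=\gamma(t_j^{(n)})$ this bounds $\sup_t\norm{\gamma_n(t)-\gamma(t)}$ by twice the modulus of continuity of $\gamma$ at $\delta_n$, hence it tends to $0$.

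The core is the convergence $l(\gamma_n)\to l(\gamma)$. For the lower bound, since $\arcsin x\ge x$, the displayed identity gives $l(\gamma_n)\ge\sum_j\norm{\gamma(t_{j+1}^{(n)})-\gamma(t_j^{(n)})}$, and I would invoke the standard fact that for a continuous path the polygonal sums converge to $l(\gamma)$ as the mesh of the partition tends to zero (proved by a routine refinement argument using uniform continuity; in the non-rectifiable case they tend to $\infty=l(\gamma)$), so $\liminf_n l(\gamma_n)\ge l(\gamma)$. For the upper bound, assuming $\gamma$ rectifiable, I would use that $\arcsin$ is convex on $[0,1)$ with $\arcsin0=0$, whence $\arcsin x\le\frac{\arcsin\delta_n'}{\delta_n'}\,x$ for $0\le x\le\delta_n':=\max_j\norm{\gamma(t_{j+1}^{(n)})-\gamma(t_j^{(n)})}$; therefore $l(\gamma_n)\le\frac{\arcsin\delta_n'}{\delta_n'}\sum_j\norm{\gamma(t_{j+1}^{(n)})-\gamma(t_j^{(n)})}\le\frac{\arcsin\delta_n'}{\delta_n'}\,l(\gamma)$, and since $\delta_n'\to0$ the prefactor tends to $1$, giving $\limsup_n l(\gamma_n)\le l(\gamma)$. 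Together with the lower bound this proves $l(\gamma_n)\to l(\gamma)$.

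For the identity $\rho=\rho_R$: the approximation just constructed shows that every rectifiable path from $P$ to $Q$ is a uniform limit of piecewise $C^1$ paths from $P$ to $Q$ of nearly the same length, whence $\rho_R(P,Q)\le\rho(P,Q)$; the reverse inequality is immediate because every piecewise $C^1$ path is rectifiable with $l_R=l$, and if no rectifiable path joins $P$ and $Q$ both quantities are $\infty$. I expect the main obstacle to be precisely the lower bound $\liminf_n l(\gamma_n)\ge l(\gamma)$, which rests on the fact that the length of a continuous path is the \emph{limit}, not merely the supremum, of its polygonal sums as the mesh shrinks; the upper bound is comparatively easy since $\arcsin$ is asymptotically linear at the origin.
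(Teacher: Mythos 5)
Your proposal is correct and reproduces the paper's construction: the same geodesic-polygon approximation built from Lemma~\ref{lem:smthConn}, the same uniform convergence estimate, and the same use of the asymptotic linearity of $\arcsin$ at the origin for the upper bound on $l(\gamma_n)$. The only variation is in the lower bound $\liminf_n l(\gamma_n)\ge l(\gamma)$, where the paper cites lower semicontinuity of length under uniform convergence \cite[Proposition~1.20]{Bri}, whereas you combine $\arcsin x\ge x$ with the standard fact that inscribed polygonal sums converge to $l(\gamma)$ as the mesh tends to zero; both routes are standard and equally valid.
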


    \begin{proof}
     Since $\gamma\colon[a,b]\to\cP$ is uniformly continuous, we can choose for each $n\in\N$ some $N(n)\in\N$ and a partition
        $a=t_0^{(n)}<\dots<t_{N(n)}^{(n)}=b$ such that
        \begin{equation}\label{eq:partition}
         \norm{\gamma(t)-\gamma(s)}<\frac{1}{n}
        \end{equation}
        for all $t,s\in[t_j^{(n)},t_{j+1}^{(n)}]$,
        $j\in\{0,\dots,N(n)-1\}$. By Lemma \ref{lem:smthConn} we can choose $C^1$-smooth paths in $\cP$
        connecting $\gamma(t_j^{(n)})$ and $\gamma(t_{j+1}^{(n)})$ with length
        $\arcsin\bigl(\|\gamma(t_{j+1}^{(n)})-\gamma(t_j^{(n)})\|\bigr)$
        for $j\in\{0,\dots,N(n)\}$. Let $\gamma_n\colon[a,b]\to\cP$ denote the concatenation of these paths for every $n\in\N$.
        Obviously, each $\gamma_n$ is piecewise $C^1$-smooth and has the same endpoints as $\gamma$.
        Since $\gamma_n(t_j^{(n)})=\gamma(t_j^{(n)})$ we have in addition for every $t\in[t_j^{(n)},t_{j+1}^{(n)}]$ the estimate
        \[
            \begin{aligned}
            \norm{\gamma_n(t)-\gamma(t)} &\le \norm{\gamma_n(t)-\gamma_n(t_j^{(n)})} + \norm{\gamma(t_j^{(n)})-\gamma(t)}\\
                    &\le l\Bigl(\gamma_n|_{[t_j^{(n)},t_{j+1}^{(n)}]}\Bigr) + \norm{\gamma(t_j^{(n)})-\gamma(t)}\\
                    &< \arcsin\Bigl(\frac1n\Bigr) + \frac1n\,,
            \end{aligned}
        \]
        and therefore
        \[
         \|\gamma_n(t)-\gamma(t)\| < \arcsin\Bigl(\frac1n\Bigr) + \frac1n
        \]
        for all $t\in[a,b]$, i.e.\ $\gamma_n$ converges uniformly to $\gamma$.

        In order to show that $l(\gamma_n)$ converges to $l(\gamma)$, let $\eps>0$ be arbitrary and take $k\in\N$ such that
        $\frac{l(\gamma)}{k}<\eps$.
        Since $\frac{\arcsin(x)}{x}$ goes to 1 as $x$ approaches zero, there is some $\delta>0$
        such that
        \begin{equation}\label{eq:arcsin}
         \arcsin(x) \le \left(1+\frac1k\right)x
        \end{equation}
        for all $0\le x < \delta$. Due to the lower semicontinuity of the length of paths (cf., \cite[Proposition 1.20]{Bri})
        we can take $N\in\N$ such that
        \begin{equation}\label{eq:upsemicont}
         l(\gamma) \le l(\gamma_n) + \eps
        \end{equation}
        whenever $n\ge N$. We may assume that $\frac{1}{N}<\delta$.
        Taking \eqref{eq:partition} into account, from \eqref{eq:arcsin} and the additivity of the length
        of paths one obtains
        \[
            \begin{aligned}
            l(\gamma_n)
                &= \sum_{j=0}^{N(n)-1} \arcsin\bigl(\|\gamma(t_{j+1}^{(n)})-\gamma(t_j^{(n)})\|\bigr)
                            \le \left(1+\frac1k\right)\sum_{j=0}^{N(n)-1}\|\gamma(t_{j+1}^{(n)})-\gamma(t_j^{(n)})\|\\
                &\le \left(1+\frac1k\right)\cdot l(\gamma)
                \end{aligned}
        \]
        and therefore
        \[
         l(\gamma_n) - l(\gamma) \le \frac{l(\gamma)}{k} < \eps\,.
        \]
        for all $n\ge N$. Together with \eqref{eq:upsemicont} we arrive at
        \[
         |l(\gamma_n) - l(\gamma)| \le \eps
        \]
        whenever $n\ge N$, i.e.\ $l(\gamma_n)$ converges to $l(\gamma)$, which completes the proof.
    \end{proof}

As a consequence, we may restrict our further considerations to
piecewise $C^1$-smooth paths only. The continuous case follows from
that by approximation with piecewise smooth paths. In particular, we
can relax the smoothness hypothesis of Lemma \ref{lem:resPiecSmth}
and obtain the following result, the Arcsine Law for continuous
paths.

\begin{corollary}\label{cor:resCont}
 Let $\gamma\colon[a,b]\to\cP$  be a continuous path. Then
    \begin{equation}\label{eq:resCont}
     \arcsin(\norm{\gamma(b)-\gamma(a)}) \le l(\gamma)\,.
    \end{equation}



\end{corollary}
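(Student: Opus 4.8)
The plan is to reduce the continuous case to the piecewise $C^1$-smooth case already settled in Lemma \ref{lem:resPiecSmth}, using the approximation furnished by Lemma \ref{lem:uniApprox}. First, if $l(\gamma)=\infty$ there is nothing to prove, since $\arcsin(\norm{\gamma(b)-\gamma(a)})\le\frac\pi2<\infty$; so from now on one may assume that $\gamma$ is rectifiable.

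By Lemma \ref{lem:uniApprox} there is a sequence $(\gamma_n)$ of piecewise $C^1$-smooth paths $\gamma_n\colon[a,b]\to\cP$, each satisfying $\gamma_n(a)=\gamma(a)$ and $\gamma_n(b)=\gamma(b)$, such that $\gamma_n$ converges uniformly to $\gamma$ and $l(\gamma_n)\to l(\gamma)$. The paths $\gamma_n$ produced there are concatenations of the unit-speed geodesics from Lemma \ref{lem:smthConn}, so $\norm{\dot\gamma_n(t)}=1$ wherever $\gamma_n$ is differentiable, and hence $l_R(\gamma_n)=\int_a^b\norm{\dot\gamma_n(t)}\,dt=l(\gamma_n)$. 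Applying the Arcsine Law for smooth paths (Lemma \ref{lem:resPiecSmth}) to each $\gamma_n$ and using that the endpoints coincide with those of $\gamma$, one obtains
\[
 \arcsin(\norm{\gamma(b)-\gamma(a)}) = \arcsin(\norm{\gamma_n(b)-\gamma_n(a)}) \le l_R(\gamma_n) = l(\gamma_n)\,.
\]
Letting $n\to\infty$ and invoking $l(\gamma_n)\to l(\gamma)$ yields $\arcsin(\norm{\gamma(b)-\gamma(a)})\le l(\gamma)$, which is precisely \eqref{eq:resCont}.

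The argument is essentially immediate once Lemmas \ref{lem:uniApprox} and \ref{lem:resPiecSmth} are available; there is no genuine obstacle. The only point deserving a moment's care is the identity $l_R(\gamma_n)=l(\gamma_n)$ for the chosen approximants, which holds because each building block of $\gamma_n$ is parametrized with unit speed, so its Riemannian length agrees with its (metric) length. Alternatively, one may simply quote the general coincidence of $l_R$ and $l$ on piecewise $C^1$-smooth paths, which is subsumed in the conclusion of Lemma \ref{lem:uniApprox}.
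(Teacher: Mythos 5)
Your proof is correct and follows exactly the approach the paper intends: the paper offers no explicit proof of Corollary~\ref{cor:resCont}, only the remark that ``the continuous case follows from [Lemma~\ref{lem:resPiecSmth}] by approximation with piecewise smooth paths,'' and you have simply carried out that approximation using Lemma~\ref{lem:uniApprox}. The only minor wrinkle---that the concatenated geodesics get reparametrized onto $[a,b]$ so ``unit speed'' is not literally preserved---is harmless, as you note, because $l_R=l$ on piecewise $C^1$ paths regardless of parametrization.
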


Recall that given a metric space $(X,d)$, a geodesic path joining
$x$ to $y$ is a map $\gamma$ from a closed interval $[0,l]$ to $X$
such that $\gamma(0)=x$, $\gamma(l)=y$ and
$\rho(\gamma(t),\gamma(s))=\abs{s-t}$ for all $s,t\in[0,l]$. We also
recall that a metric space $(X,d)$ is said to be $r$-geodesic if for
every pair of points $x,y\in X$ with $d(x,y)<r$ there is a geodesic
path joining $x$ to $y$.

The main result of this geometric section  characterizes the local
geodesic behavior of the length space $(\cP,\rho)$. In particular,
we obtain a concrete local representation of the length metric
$\rho$ in terms of the norm of the angle operator.

\begin{theorem}\label{lm}
 The metric space $(\cP,\rho)$ is $\frac{\pi}{2}$-geodesic. Moreover, it is $\rho(P,Q)<\frac{\pi}{2}$ if and only if
    $d(P,Q)=\norm{P-Q}<1$. In that case
    \[
    \rho(P,Q)=\arcsin(\norm{P-Q})\,.
 \]
 In particular,
  $\Ran Q$ is a graph subspace
 with respect to the orthogonal decomposition
 $\cH=\Ran P \oplus \Ran P^\perp$ and
    \[
    \rho(P,Q)=\norm{\Theta}\,,
 \]
 where $\Theta$ is the operator angle between $\Ran Q$ and $\Ran P$.
    \begin{proof}
        Suppose that $P$ and $Q$ are orthogonal projections such that $\rho(P,Q)<\frac\pi 2$. In particular, since
        $\rho(P,Q)$ is finite, this means that there is a continuous path $\gamma$ connecting $P$ and $Q$.
        For any such path we have by Corollary \ref{cor:resCont} that
        \begin{equation}\label{eq:arcsinvslength}
         \arcsin(\norm{P-Q}) \le l(\gamma)\,.
        \end{equation}
        Going to the infimum over connecting paths, we obtain
        \begin{equation}\label{eq:ras}
         \arcsin(\norm{P-Q}) \le \rho(P,Q)\,,
        \end{equation}
        and hence
        \[
         \norm{P-Q}\le \sin(\rho(P,Q))<1\,,
        \]
        due to $\rho(P,Q)<\frac\pi2$\,.

        Conversely, if $\norm{P-Q}<1$, by Lemma \ref{lem:smthConn} there is a $C^1$-smooth geodesic path $\gamma$ connecting
        $P$ and $Q$ of length $l(\gamma)=\arcsin(\norm{P-Q})$ and therefore
        \begin{equation}\label{eq:dva}
         \rho(P,Q) \le l(\gamma) = \arcsin(\norm{P-Q})<\frac\pi2\,.
        \end{equation}
        Thus, $(\cP,\rho)$ is $\frac\pi2$-geodesic, and combining \eqref{eq:ras} and \eqref{eq:dva} proves
        the remaining statement of the theorem.
    \end{proof}%
\end{theorem}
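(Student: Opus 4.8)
The plan is to derive everything from the Arcsine Law for continuous paths (Corollary \ref{cor:resCont}) together with the explicit geodesic construction of Lemma \ref{lem:smthConn}. These two ingredients give the two halves of the equivalence $\rho(P,Q)<\frac\pi2 \iff \norm{P-Q}<1$, and the geodesic from Lemma \ref{lem:smthConn} simultaneously realizes the infimum defining $\rho$, which pins down the exact value $\rho(P,Q)=\arcsin(\norm{P-Q})$.

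First I would assume $\rho(P,Q)<\frac\pi2$. Finiteness of $\rho(P,Q)$ means there is at least one rectifiable path $\gamma$ joining $P$ to $Q$; for every such path Corollary \ref{cor:resCont} yields $\arcsin(\norm{P-Q})\le l(\gamma)$, and passing to the infimum over connecting paths gives $\arcsin(\norm{P-Q})\le\rho(P,Q)<\frac\pi2$. Since $\arcsin$ is increasing on $[0,1]$, this forces $\norm{P-Q}\le\sin(\rho(P,Q))<1$. Conversely, if $\norm{P-Q}<1$, Lemma \ref{lem:smthConn} supplies a $C^1$-smooth path $\gamma\colon[0,l]\to\cP$ connecting $P$ and $Q$ with $\norm{\dot\gamma(t)}=1$ everywhere and $l=\arcsin(\norm{P-Q})<\frac\pi2$; its length is $l(\gamma)=l_R(\gamma)=\int_0^l\norm{\dot\gamma(t)}\,dt=l$ (using Lemma \ref{lem:uniApprox} to identify the two notions of length), so $\rho(P,Q)\le l(\gamma)=\arcsin(\norm{P-Q})<\frac\pi2$. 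Combining the two bounds gives $\rho(P,Q)=\arcsin(\norm{P-Q})$ whenever either side is $<\frac\pi2$, and in particular $(\cP,\rho)$ is $\frac\pi2$-geodesic: for $d(P,Q)<1$ the arclength reparametrization of the Lemma \ref{lem:smthConn} path is a geodesic, because it has unit speed and its length equals the distance between its endpoints, which (by the same equivalence applied to sub-arcs, whose endpoints are again at $d$-distance $<1$) shows $\rho(\gamma(t),\gamma(s))=\abs{t-s}$.

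The remaining clauses are immediate bookkeeping: $\norm{P-Q}<1$ is exactly the condition under which $\Ran Q=\cG(X)$ is a graph subspace over $\Ran P$ (recalled in Section \ref{sec:Preliminaries}), and the identity $\arcsin\norm{P-Q}=\norm{\Theta}$ is \eqref{eq:NormProjDiff} with $\Theta=\arctan\sqrt{X^*X}$; substituting into $\rho(P,Q)=\arcsin\norm{P-Q}$ gives $\rho(P,Q)=\norm{\Theta}$.

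The main obstacle is the lower bound $\arcsin(\norm{P-Q})\le\rho(P,Q)$, i.e.\ ruling out that some clever \emph{continuous} (non-smooth) path connecting $P$ and $Q$ could be shorter than the smooth geodesic. This is precisely what Corollary \ref{cor:resCont} delivers, but that corollary in turn rests on the smooth-path Arcsine Law (Lemma \ref{lem:resPiecSmth}) plus the approximation result Lemma \ref{lem:uniApprox}; so all the real analytic work — the differential inequality $F'\le(1+F^2)\norm{\dot\gamma}$ for the norm of the angular operator, solved via $\arctan F$ — has already been done upstream, and here it only needs to be invoked. Everything else in the proof is a short chain of inequalities.
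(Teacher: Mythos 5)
Your proposal follows the paper's own proof essentially step for step: the lower bound $\arcsin\norm{P-Q}\le\rho(P,Q)$ from Corollary \ref{cor:resCont}, the upper bound from the explicit unit-speed path of Lemma \ref{lem:smthConn}, and the identification $\arcsin\norm{P-Q}=\norm{\Theta}$ via \eqref{eq:NormProjDiff}. If anything you are a bit more careful than the paper, which simply calls the Lemma \ref{lem:smthConn} path a ``geodesic'' without justifying the sub-arc identity $\rho(\gamma(s),\gamma(t))=|t-s|$; your remark that this follows by applying the established equivalence to the sub-arcs (or, even more directly, from the triangle-inequality chain $\rho(\gamma(0),\gamma(l))\le\rho(\gamma(0),\gamma(s))+\rho(\gamma(s),\gamma(t))+\rho(\gamma(t),\gamma(l))\le l=\rho(\gamma(0),\gamma(l))$, forcing equality) closes that small gap.
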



\section{Applications}\label{sec:appl}

Paths  of orthogonal projections  naturally arise  when considering
families of self-adjoint operators  depending  smoothly on a
parameter. Under the additional hypothesis that the self-adjoint
family has a spectrum consisting of two separated parts, the main
problem is  to obtain integral estimates in terms of the relative
strength of the perturbation along the path versus the distance
between the components.
 The upper semicontinuity of the spectrum under a perturbation allows one to obtain efficient estimates on the rotation angle  of the  spectral subspaces, especially in the case
where  the {\it a posteriori} knowledge of the evolution of the separated parts of the  spectra is known.

First, we recall the  concept of an upper semicontinuous family of sets depending on a parameter.

\begin{definition} We say that a family of sets $\{\omega_t\}_{t\in I}$,  with $I$ an interval,
is upper semicontinuous at the point $t\in I$ if   for any $\varepsilon >0$ there exists a $\delta >0$ such that
\begin{equation}\label{eq:addSpecProp}
    \rho(\omega_s, \omega_t)=\sup_{\lambda\in \omega_s}\dist (\lambda,\omega_t)<\varepsilon\quad \text{whenever } \quad |s-t|<\delta,\quad s,t\in I.
    \end{equation}
    The family   $\{\omega_t\}_{t\in I}$, is called upper semicontinuous  on $I$ if it is
    upper semicontinuous at any point $t\in I$.
\end{definition}
Without any loss of generality, we will assume any interval $I$ to contain $0$ throughout this section.

It is well known (see, e.g., \cite[Theorem V.4.10]{Kato}), that
given a $C^1$-smooth path $I\ni t\mapsto B_t$ of self-adjoint
bounded operators, the family of their spectra $\{\spec(B_t)\}_{t\in
I}$ is upper semicontinuous on $I$. Under the additional assumption
that the spectrum of each $B_t$ is separated into two disjoint
components, one can expect the two corresponding families of
spectral components to be upper semicontinuous as well, provided
that they are chosen appropriately. Under these hypotheses, one can
study the variation of the corresponding spectral subspaces under a
variation of the parameter $t\in I$. A natural way of doing that, is
to estimate the deviation  of the corresponding spectral projections
in the length space $(\cP, \rho)$.

As the main application of Theorem \ref{main} we obtain the following result.


\begin{theorem}\label{mainappl}  Assume that $I\ni t \mapsto B_t$ is a $C^1$-smooth path of self-adjoint bounded operators.
Suppose  that the spectrum of each $B_t$ consists of two disjoint spectral components that upper semicontinuously depend on the parameter $t$.
That is, assume that there exist   nonempty closed subsets $\omega_t,\Omega_t\subset\R$ such that for all $t\in I$
\begin{itemize}
\item[(i)] $
    \spec(B_t)=\omega_t\cup\Omega_t$,
\item[(ii)]
    $\dist(\omega_t,\Omega_t)>0$,
\item[(iii)] the families $\{\omega_t\}_{t\in I}$ and  $\{\Omega_t\}_{t\in I}$ are upper semicontinuous on $I$.
\end{itemize}Let
\begin{equation}\label{eq:defSpecProj}
    P_t:= \EE_{B_t}(\omega_t), \quad t\in I,
\end{equation}
denote the spectral projection of the self-adjoint operator  $B_t$ associated with the set
$\omega_t$.

Then
\begin{equation}\label{spes}
\rho(P_t,P_0)=\arcsin (\|P_t-P_0\|)\le \frac{\pi}{2}\int_0^t
\frac{\|\dot B_\tau\|}{\dist (\omega_\tau, \Omega_\tau)}
        \,d\tau
,\quad t\in I.
\end{equation}
\end{theorem}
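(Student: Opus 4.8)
The plan is to combine the Arcsine Law (Corollary \ref{cor:resCont}) with the classical resolvent-based bound on the derivative of spectral projections. The key is that $t\mapsto P_t$ is a continuous path in $\cP$ — indeed, piecewise $C^1$-smooth away from points where the spectral components are merely semicontinuous — so by Corollary \ref{cor:resCont} we have $\arcsin(\|P_t-P_0\|)\le l(\gamma)$ where $\gamma$ is the path $\tau\mapsto P_\tau$, $\tau\in[0,t]$. It then suffices to bound the length $l(\gamma)$, equivalently $\int_0^t\|\dot P_\tau\|\,d\tau$ where the integrand is understood correctly, by $\frac\pi2\int_0^t\|\dot B_\tau\|/\dist(\omega_\tau,\Omega_\tau)\,d\tau$.

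The main technical point is the pointwise estimate $\|\dot P_\tau\|\le\frac\pi2\cdot\|\dot B_\tau\|/\dist(\omega_\tau,\Omega_\tau)$ at each $\tau$ where differentiability holds. The standard argument: since $\omega_\tau,\Omega_\tau$ are disjoint closed sets with positive distance and (by upper semicontinuity) the spectrum of $B_s$ stays near $\omega_\tau\cup\Omega_\tau$ for $s$ near $\tau$, one may enclose $\omega_\tau$ by a contour $\Gamma$ with $\dist(\Gamma,\spec(B_\tau))$ comparable to $\dist(\omega_\tau,\Omega_\tau)$, and write
\[
 P_s = \frac{-1}{2\pi\ii}\oint_\Gamma (B_s-z)^{-1}\,dz
\]
for $s$ near $\tau$; differentiating,
\[
 \dot P_\tau = \frac{1}{2\pi\ii}\oint_\Gamma (B_\tau-z)^{-1}\dot B_\tau (B_\tau-z)^{-1}\,dz\,.
\]
A naive length-of-contour estimate gives a constant worse than $\pi/2$; to get exactly $\pi/2$ one uses the sharp resolvent identity — e.g. the formula $\|\dot P_\tau\|\le\frac{\pi}{2}\frac{\|\dot B_\tau\|}{\dist(\omega_\tau,\Omega_\tau)}$ is precisely the known optimal bound (cf.\ \cite{Kos1,Kos3}), obtained by choosing $\Gamma$ to be (a limit of) a straight vertical line separating the two components and carefully evaluating the resulting scalar integral $\int_{-\infty}^\infty \frac{dy}{(a^2+y^2)}$ type expression, which produces the factor $\pi/(2\dist)$ rather than $1/\dist$. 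I would invoke this as the known sharp infinitesimal estimate rather than rederive it.

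Once the pointwise bound is in hand, one integrates: on any subinterval where $\tau\mapsto B_\tau$ and hence $\tau\mapsto P_\tau$ is $C^1$,
\[
 l(\gamma|_{[\tau_1,\tau_2]}) = \int_{\tau_1}^{\tau_2}\|\dot P_\tau\|\,d\tau \le \frac\pi2\int_{\tau_1}^{\tau_2}\frac{\|\dot B_\tau\|}{\dist(\omega_\tau,\Omega_\tau)}\,d\tau\,,
\]
and summing over a partition (the function $\tau\mapsto\dist(\omega_\tau,\Omega_\tau)$ is bounded below by a positive constant on compact subintervals, by upper semicontinuity of both families together with (ii), so the integrand is integrable) gives $l(\gamma)\le\frac\pi2\int_0^t\|\dot B_\tau\|/\dist(\omega_\tau,\Omega_\tau)\,d\tau$. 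Combining with Corollary \ref{cor:resCont} yields $\arcsin(\|P_t-P_0\|)\le\frac\pi2\int_0^t\|\dot B_\tau\|/\dist(\omega_\tau,\Omega_\tau)\,d\tau$; and $\rho(P_t,P_0)=\arcsin(\|P_t-P_0\|)$ follows from Theorem \ref{lm} once we know $\|P_t-P_0\|<1$ — which holds because the right-hand side, if it were $\ge\pi/2$, would make the inequality vacuous, so we may assume it is $<\pi/2$ and then $\arcsin(\|P_t-P_0\|)<\pi/2$ forces $\rho(P_t,P_0)<\pi/2$, i.e.\ $\|P_t-P_0\|<1$.

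The step I expect to be the main obstacle is the regularity bookkeeping: establishing that $\tau\mapsto P_\tau$ is continuous (so that Corollary \ref{cor:resCont} applies) and piecewise $C^1$ with the stated derivative bound, \emph{using only} the upper semicontinuity hypothesis (iii) rather than continuity of the spectral components. One must argue that at each fixed $\tau_0$, upper semicontinuity of $\{\omega_t\}$ and $\{\Omega_t\}$ together with $\dist(\omega_{\tau_0},\Omega_{\tau_0})>0$ forces, for $t$ near $\tau_0$, that $\spec(B_t)$ splits along the \emph{same} separating contour, so that $P_t$ is given locally by a Riesz integral over a $\tau_0$-independent contour; continuity and local $C^1$-dependence of $P_t$ on $t$ then follow from the corresponding properties of the resolvent. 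A mild subtlety is that the distance $\dist(\omega_\tau,\Omega_\tau)$ need only be lower semicontinuous, not continuous, so one should be slightly careful that the integrand on the right is measurable and the integral well-defined — but this causes no real difficulty on compact subintervals.
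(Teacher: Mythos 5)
Your overall strategy is the same as the paper's: prove the pointwise derivative estimate
$\norm{\dot P_\tau}\le\frac\pi2\,\norm{\dot B_\tau}/\dist(\omega_\tau,\Omega_\tau)$ and then integrate via the Arcsine Law. This matches the paper's alternative proof in Appendix~\ref{app:altProof}, and your regularity concerns (continuity and piecewise $C^1$-smoothness of $\tau\mapsto P_\tau$ from the upper-semicontinuity hypothesis, measurability of the integrand) are exactly the points the paper handles in Lemma~\ref{lem:specProjSmooth} and Lemma~\ref{lem:specDistCont}.

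However, your sketch of \emph{where} the constant $\pi/2$ comes from is wrong, and wrong in a telling way. You claim that a naive contour estimate gives a constant worse than $\pi/2$, and that switching to a straight vertical separating line and evaluating $\int_{-\infty}^{\infty}\frac{dy}{a^2+y^2}$ tightens it to $\pi/(2\dist)$. Two problems. First, when the components $\omega_\tau$ and $\Omega_\tau$ are interlaced (e.g.\ $\omega_\tau=\{0,2\}$, $\Omega_\tau=\{1\}$), there \emph{is} no separating vertical line, so this contour simply does not exist; the hypothesis of the theorem allows exactly this configuration. Second, in the subordinated case where a vertical line through the midpoint of the gap \emph{does} separate the components, the computation you describe actually gives the \emph{better} constant $1$, not $\pi/2$: with $z=x_0+iy$ on the midline one has $\norm{(z-B_\tau)^{-1}}\le\bigl((d/2)^2+y^2\bigr)^{-1/2}$ since the spectrum is real, hence
\[
\norm{\dot P_\tau}\le\frac{\norm{\dot B_\tau}}{2\pi}\int_{-\infty}^{\infty}\frac{dy}{(d/2)^2+y^2}=\frac{\norm{\dot B_\tau}}{d}\,,
\]
which is precisely the improved estimate~\eqref{netpi} the paper records in the remark for subordinated spectra. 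The constant $\pi/2$ is \emph{worse} than $1$; it is the price paid for allowing arbitrary interlacing, and it cannot be obtained from a single separating contour. The paper derives it in two genuinely different ways: via the Daletskii--Krein formula expressing $\dot P_t$ as a double operator integral whose transformer norm is controlled by Sz.-Nagy's optimal $L^1$-extension constant (the main proof), or via McEachin's direct operator-theoretic bound $\dist(\omega,\Omega)\norm{\EE_A(\omega)\EE_B(\Omega)}\le\frac\pi2\norm{A-B}$ (the appendix proof). You do say you would invoke the bound as known rather than rederive it, which keeps the proof structurally sound, but the explanation you attach to it is not a derivation of the $\pi/2$ bound and shows the mechanism backwards.
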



\begin{proof} We make use of the concept of double operator integrals. Those readers, who
prefer to see a ``standard'' proof   are referred to Appendix
\ref{app:altProof}.


Recall that by the Daletskii-Krein differentiation formula one
obtains the representation
\begin{equation}\label{DK}
\frac{d}{dt}f(B_t)=\int\int\frac{f(\lambda)-f(\mu)}{\lambda-\mu}d\EE_{B_t}
(\lambda)\dot B_td\EE_{B_t}(\mu),
\end{equation}
where $d\EE_{B_t}$ stands for the spectral measure of the self-adjoint operator $B_t$
and $f$ is a $C^\infty$-function on an open interval $(a, b)$ containing the
spectrum of the bounded operator $B_t$.
Under the spectra separation hypothesis one can find an $f\in C^\infty_0([a,b])$ such that
$$
f(\lambda)=\begin{cases}1, & \lambda\in \omega_t,\\
0, & \lambda\in \Omega_t.
\end{cases}
$$
For those $f$'s  one easily concludes that
$$
f(B_t)=\EE_{B_t}(\omega_t)=P_t, \quad t\in I,
$$
and therefore, from \eqref{DK}, one obtains the representation
$$
\dot P_t=\int\int\frac{f(\lambda)-f(\mu)}{\lambda-\mu}d\EE_{B_t}
(\lambda)\dot B_td\EE_{B_t}(\mu),\quad t\in I.
$$
Hence,
\begin{align}
P_t\dot PP_t^\perp &=\int\int\frac{f(\lambda)-f(\mu)}{\lambda-\mu}(P_td\EE_{B_t}
(\lambda)) B_t(d\EE_{B_t}(\mu)P_t^\perp)\nonumber\\
&=\int\int\frac{1}{\lambda-\mu}(P_td\EE_{B_t}
(\lambda))P_t\dot B_tP_t^\perp (d\EE_{B_t}(\mu)P_t^\perp),\quad t\in I.\label{doi}
\end{align}
Since the spectral  measures $ P_td\EE_{B_t}$ and  $d\EE_{B_t}(\mu)P_t^\perp$ are supported by the sets
$\omega_t$ and $\Omega_t$, respectively,  and the sets $\omega_t$ and $\Omega_t$ are separated
with
$$\dist(\omega_t,\Omega_t)>0,
$$
the right hand side of \eqref{doi} can be represented as follows
\begin{equation}\label{trans}
\int\int\frac{1}{\lambda-\mu}(P_td\EE_{B_t}
(\lambda))P_t\dot B_tP_t^\perp (d\EE_{B_t}(\mu)P_t^\perp)=\int_\R e^{isB_t}P_t\dot B_tP_t^\perp  e^{-isB_t}g(s)ds,
\end{equation}
where $g$ denotes any function in $L^1(\R)$, continuous except at zero, such that
$$
\int_\R e^{-is\lambda }g(s)ds=\frac1\lambda \quad \text{whenever} \quad |\lambda|\ge \frac{1}{ \dist(\omega_t,\Omega_t)}.
$$

In particular, one gets the estimate
$$
\|\dot P_t\|=\|P_t\dot P_tP_t^\perp\|\le c \frac{\|\dot P_tB_tP_t^\perp\|}
{\dist(\omega_t,\Omega_t)}\le c \frac{\|\dot B_t\|}
{\dist(\omega_t,\Omega_t)}, \quad t\in I,
$$
where
$$c=\inf \left \{ \norm{g}_{L^1(\R)}\,:\,g\in L^1(\R), \,\widehat g(\lambda)=\frac1\lambda,\, |\lambda|\ge 1\right \},
$$
In fact, see \cite{SS},
$$
c=\frac\pi2,
$$
and hence
one gets the estimate
\begin{equation}\label{better}
\|\dot P_t\|\le \frac\pi2 \frac{\|\dot B_t\|}
{\dist(\omega_t,\Omega_t)}, \quad t\in I.
\end{equation}

 Applying Lemma  \ref{lem:resPiecSmth} completes the proof.
\end{proof}

\begin{remark} We refer to the work of  R. McEachin \cite{Mc} where in fact  it is shown that the norm of the transformer given by the double operator integral \eqref{trans} is $\frac{\pi}{2\dist(\omega_t,\Omega_t)}$ and therefore one cannot expect to get an estimate better than \eqref{better} in general.

However,  if, in addition to the hypotheses of Theorem
\ref{mainappl}, the spectral components $\omega_t$ and $\Omega_t$
are subordinated, i.e.\ $\sup \omega_t > \inf \Omega_t$, or vice
versa,  or if they are annular separated, that is,
 the convex hull of $\omega_t$ lies in the complement to  the set $\Omega_t$   for all $t\in I $, or vice versa, the estimate \eqref{spes} can be strengthened as  follows
\begin{equation}\label{netpi}
\arcsin (\|P_t-P_0\|)\le \int_0^t \frac{\|\dot B_\tau\|}{\dist(\omega_\tau,\Omega_\tau)}
        \,d\tau\,
,\quad t\in I.
\end{equation}
Note, that this estimate is sharp in general (at least in the case of subordinated spectra), as we already know from our previous considerations
in sections \ref{sec:smthPaths} and \ref{sec:geodMetrSp}.
Indeed, for a $C^1$-smooth path $I\ni t\mapsto P_t$ of orthogonal projections take $B_t=P_t$, $\omega_t=\{1\}$ and $\Omega_t=\{0\}$, $t\in I$. Then
it is $\spec(B_t)=\omega_t\cup\Omega_t$ and $\dist(\omega_t,\Omega_t)=1$ for all $t\in I$ and therefore, in this case, \eqref{netpi} coincides with \eqref{eq:resPiecwSmth}, which is
sharp in general.
\end{remark}

The following proposition based on a detailed analysis of one of the
realizations of the Heisenberg commutation relations shows that the
estimate \eqref{spes} in Theorem \ref{mainappl}, being understood in
a somewhat more general context where the consideration of unbounded
operators
   is not excluded, is sharp.

 \begin{proposition}
 Let  $D$  be the differentiation operator with periodic boundary conditions in $L^2(-1,1)$ given by the differential expression
\begin{equation}\label{difex}
D =\frac{d}{dx} \quad\text{on}\quad \Dom (D)=\left \{ f\in W^{2,1}(-1,1)\, | f(-1)=f(1)\right \}.
\end{equation}

Introduce  the isospectral path  $[0,\frac\pi2]\ni t \mapsto B_t$ of unbounded self-adjoint operators
  \begin{equation}\label{iso} B_t=U_t(iD)U_t^*  \quad\text{on}\quad \Dom(B_t)=U_t \Dom(D),
  \end{equation}
  where $U_t$ is the family of unitary operators given by \begin{equation}\label{pathu}
(U_tf)(x)=e^{i2tx}f(x), \quad t\in \left [0, \frac\pi2\right ].
\end{equation}

 Set \begin{equation}\label{spsets}\omega_t=2\pi \Z , \quad \Omega_t =2\pi \Z\setminus\pi \Z,
 \end{equation}
 and  denote by $P_t$ the spectral projection of $B_t$ onto the subspace of ``even harmonics'', that is,
  \begin{equation}\label{proiso}
  P_t=\EE_{B_t}(\omega_t ), \quad t\in \left [0,\frac\pi2\right].
  \end{equation}

  Then
  \begin{equation}\label{fin}
  \arcsin(\|P_t-P_0\|)=\frac{\pi}{2}\int_0^t\frac{\|\overline{\dot B_\tau}\|}{\dist (\omega_\tau, \Omega_\tau)}\, d\tau,
  \quad t\in\left [0,\frac\pi2\right],
  \end{equation}
  where $\overline{\dot B_t}$ denotes the closure of the strong derivative $\dot B_t=\frac{d}{dt} B_t$ of the path  initially defined on
  $$\Dom (\dot B_t)=C^\infty_0(-1,1).$$
 \end{proposition}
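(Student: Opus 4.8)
The plan is to verify \eqref{fin} by computing both of its sides explicitly; the whole argument rests on the observation that $B_t$ is, up to a $t$-dependent domain, simply $\ii D$ shifted by the scalar $2t$. First I would pin down the integrand. Since $(U_tf)(x)=e^{\ii 2tx}f(x)$, a direct check shows that $C^\infty_0(-1,1)\subset\Dom(B_t)$ for every $t$ and that $B_tf=\ii f'+2tf$ on $C^\infty_0(-1,1)$; hence the strong derivative is $\dot B_tf=2f$ there, so that $\overline{\dot B_t}=2I$ and $\|\overline{\dot B_t}\|=2$ for all $t$. For the denominator, the functions $e^{\ii\pi nx}$, $n\in\Z$, are eigenfunctions of $D$, so $\spec(\ii D)=\pi\Z$; as $B_t$ is unitarily equivalent to $\ii D$, its spectrum is also $\pi\Z$ for every $t$, and it is the disjoint union of $\omega_t=2\pi\Z$ (the eigenvalues on the even harmonics) and $\Omega_t$ (the odd multiples of $\pi$), whence $\dist(\omega_t,\Omega_t)=\pi$. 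Substituting these into the right-hand side of \eqref{fin} gives $\tfrac{\pi}{2}\int_0^t\tfrac{2}{\pi}\,d\tau=t$, so that it remains to prove $\|P_t-P_0\|=\sin t$ on $[0,\tfrac{\pi}{2}]$, since then $\arcsin(\|P_t-P_0\|)=t$.

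To compute this norm, I would first note $P_t=\EE_{B_t}(\omega_t)=U_t\EE_{\ii D}(\omega_t)U_t^{*}=U_tP_0U_t^{*}$, and then pass to the orthogonal decomposition $L^2(-1,1)=\Ran P_0\oplus\Ran P_0^{\perp}$, where $\Ran P_0$ is the space of $1$-periodic functions (the even harmonics) and $\Ran P_0^{\perp}$ that of $1$-antiperiodic functions (the odd harmonics), each identified with $L^2(0,1)$ by restriction; with respect to it $P_0\cong I\otimes(\begin{smallmatrix}1&0\\0&0\end{smallmatrix})$. Splitting $e^{\ii 2tx}$ into its $1$-periodic and $1$-antiperiodic parts --- on $(0,1)$ these are $\tfrac12(e^{\ii 2tx}+e^{\ii 2t(x-1)})=e^{\ii t(2x-1)}\cos t$ and $\tfrac12(e^{\ii 2tx}-e^{\ii 2t(x-1)})=e^{\ii t(2x-1)}\ii\sin t$ --- a short computation yields the block form
\[
    U_t\;\cong\;M_{e^{\ii t(2x-1)}}\otimes\begin{pmatrix}\cos t&\ii\sin t\\ \ii\sin t&\cos t\end{pmatrix}.
\]
The scalar multiplication factor commutes with $P_0$ and cancels against the corresponding factor of $U_t^{*}$, so that
\[
    P_t\;\cong\;I\otimes\begin{pmatrix}\cos^2 t&-\ii\sin t\cos t\\ \ii\sin t\cos t&\sin^2 t\end{pmatrix},\qquad
    P_t-P_0\;\cong\;I\otimes\begin{pmatrix}-\sin^2 t&-\ii\sin t\cos t\\ \ii\sin t\cos t&\sin^2 t\end{pmatrix}.
\]
The constant Hermitian $2\times2$ matrix on the right has trace $0$ and determinant $-\sin^2 t$, hence eigenvalues $\pm\sin t$ and norm $\sin t$ for $t\in[0,\tfrac{\pi}{2}]$; thus $\|P_t-P_0\|=\sin t$, which completes the argument.

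I expect the delicate point to be the block-matrix reduction in the last step --- carrying out the periodic/antiperiodic splitting of $L^2(-1,1)$ correctly and verifying the displayed block form of $U_t$ --- together with the (harmless but necessary) bookkeeping forced by the unbounded setting: that $C^\infty_0(-1,1)$ is a dense subspace of $\Dom(B_t)$ on which $\dot B_t$ is defined, and that $\EE_{B_t}(\cdot)=U_t\EE_{\ii D}(\cdot)U_t^{*}$ holds under the unitary conjugation. I would also point out that the same computation yields $\|\dot P_t\|\equiv 1$, so that $t\mapsto P_t$ is in fact a unit-speed geodesic in $(\cP,\rho)$ joining $P_0$ to $P_{\pi/2}=P_0^{\perp}$; this is consistent with Theorem \ref{lm} and shows that the estimate \eqref{spes} of Theorem \ref{mainappl} is attained once unbounded operators $B_t$ are admitted, which is exactly what the proposition records.
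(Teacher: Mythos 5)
Your proof is correct, and it takes a genuinely different route from the paper. The paper splits the computation of $\norm{P_t-P_0}$ into a lower bound and an upper bound: for the lower bound it estimates $\norm{P_0 U_t P_0^{\perp}}$ from below by identifying this operator (up to the scalar $-\tfrac{\sin 2t}{2\pi}$) with the regularized discrete Hilbert transform $H_p$ on $\ell^2(\Z)$, computes the $\ell^\infty$-norm of its symbol to get $\norm{H_p}=\pi/\cos t$, and deduces $\norm{P_t-P_0}\ge\sin t$; for the upper bound it computes $\norm{\dot P_t}=\norm{[2\hat x,P_0]}=1$ via the limit $\lim_{t\downarrow 0}\norm{P_0U_tP_0^{\perp}}/t$ and then invokes the Arcsine Law (Lemma \ref{lem:resPiecSmth}) to get $\norm{P_t-P_0}\le\sin t$. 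Your argument avoids both ingredients: the periodic/antiperiodic splitting $L^2(-1,1)\cong L^2(0,1)\otimes\C^2$, under which $P_0\cong I\otimes E_{11}$ and $U_t$ factors as a scalar multiplication tensored with a constant $2\times 2$ rotation-like matrix, diagonalizes $P_t-P_0$ outright and gives $\norm{P_t-P_0}=\sin t$ in a single elementary computation, with $\norm{\dot P_t}\equiv 1$ falling out as a byproduct rather than being an input. This is cleaner and more self-contained (it does not rely on Lemma \ref{lem:resPiecSmth} or on the symbol calculus for $H_p$), at the cost of losing the explicit link to the Hilbert transform that the paper's proof exhibits.

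Two small remarks on the source: you have silently corrected a sign inconsistency in the paper's computation of $\dot B_t$ (one has $[\hat x,D]=-I$, hence $\dot B_t f=2f$ rather than $-2f$; in any case only $\norm{\overline{\dot B_t}}=2$ matters), and you have also implicitly fixed the misprint in \eqref{spsets} where $\Omega_t$ should read $\pi\Z\setminus 2\pi\Z$ (the odd multiples of $\pi$), since $\spec(\ii D)=\pi\Z$ on $L^2(-1,1)$ with period-$2$ boundary conditions. Both corrections are consistent with what the paper actually uses downstream.
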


\begin{proof} First, one observes that
 \begin{equation}\label{clos}
 \dot B_tf= \frac{d}{dt} B_t f=2U_t[\hat x,D]U^*_tf=-2f \quad\text{ for all } f\in C^\infty_0(-1,1),
  \end{equation}
  where we used the commutation relation
  $$
  [\hat x,D]=I \quad \text{on}\quad C^\infty_0(-1,1)\subset \Dom(\hat xD)\cap \Dom (D\hat x)
  $$
  relating the differentiation operator $D$ and the (bounded) multiplication operator
  $\hat x$ by the independent variable on $L^2(-1,1)$. Thus, the strong derivative $\dot B_t$ is well defined on
  $\Dom(\dot B_t)=C^\infty_0(-1,1)$ and hence
  $$
  \overline{\dot B_t}=-2I_{L^2(-1,1)}.
  $$

  On the other hand, the spectrum of $iD$ consists of simple eigenvalues located at the points of the lattice $2\pi \Z$, so does the spectrum of the isospectral path $B_t$ given by \eqref{iso}. In particular,
  $$
  \dist (\omega_t, \Omega_t)=\pi, \quad t\in\left [0,\frac\pi2\right],
  $$
  and hence
  $$
  \frac{\pi}{2}\int_0^t\frac{\|\overline{\dot B_\tau}\|}{\dist (\omega_\tau, \Omega_\tau)}\, d\tau=
  \frac{\pi}{2}\int_0^t\frac 2\pi\,d \tau=t,\quad t\in\left [0,\frac\pi2\right].
  $$
  To complete the proof of \eqref{fin} it suffices to show that
  $$
  \arcsin(\|P_t-P_0\|)=t,\quad t\in\left [0,\frac\pi2\right].
  $$

  We will prove a slightly more general result that states that the path of the orthogonal projections $\left [0,\frac\pi2\right]\ni t \to P_t$
  is a geodesic.  That is,
 $$
  \arcsin(\|P_t-P_0\|)=\int_0^t\norm{\dot P_\tau}\,d\tau=t,\quad t\in\left [0,\frac\pi2\right].
  $$

  Introduce the notation $P=P_0$. From the definition \eqref{spsets} of the sets $\omega_t$ it follows that $P$ is  the orthogonal projection onto the closure of
$\text{ span}_{k\in \Z}\{ e^{i2kx}\}$,  the space generated by the  ``even harmonics''.
  From \eqref{iso} and \eqref{proiso} it follows that
  \begin{equation}\label{pathp}
P_t=U_tPU_t^*, \quad t\in\left [0,\frac\pi2\right],
\end{equation}
where the family of unitary operators $U_t$ is given by \eqref{pathu}.

First, we prove the inequality
\begin{equation}\label{claim11}
\|P_t-P\|\ge \sin t,\quad t\in\left [0,\frac\pi2\right],
\end{equation}

We proceed as follows.

 One observes that
\begin{align*}
\|(U_tPU_t^*-P)\|&=\|(U_tPU_t^*-P)U_t\|
\\&\ge \|(U_tPU_t^*-P)U_tP^\perp\|
= \|PU_tP^\perp \|,\quad t\in\left [0,\frac\pi2\right].
\end{align*}

The operator $PU_tP^\perp$ can easily be shown to be  unitarily equivalent
(up to a scalar factor) to
the  regularized discrete Hilbert transform $H_p$ in $\ell^2(\Z)$ with
$
p=\frac{2t+\pi}{2\pi}
$,
\begin{equation}\label{fi} PU_tP^\perp\sim-\frac{\sin 2t}{2\pi}{H_p},\quad t\in \left (0,\frac\pi2\right),
\end{equation}
where the symbol $\sim$ denotes a unitary equivalence.

 Recall that by  the  definition the regularized Hilbert transform  $H_p$ is given by the  following convolution operator
$$
(H_p\hat a)_m=\sum_{n\in \Z} \frac{a_n}{m-n+p}, \quad \hat a =\{a_n\}_{n\in \Z}\in \ell^2(\Z), \quad
p\in ( 0,1).
$$

Indeed, to prove \eqref{fi}, take a
 $g\in \Ran P^\perp$  with  the Fourier series
$$
g(x)=\sum_k g_k e^{i(2k+1)\pi x}.
$$
Then  the Fourier series of the function $PU_tP^\perp g$ is given by
\begin{align}
(PU_tP^\perp g)(x)
&=\sum_m \sum_k g_k\frac{1}{2}\left (\int_{-1}^1 e^{i((2(k-m)+1)\pi +2t)\tau }d\tau\right ) e^{2im\pi x}
\label{comp}\\&
=-\sum_{k,m}g_k \frac{ \sin 2t }{(2(k-m)+1)\pi +2t}e^{2im\pi x}\nonumber \\
&=-\frac{\sin 2t}{2\pi}\sum_m (H_p\hat g)_k e^{2im\pi x}, \quad \hat g =\{g_k\}_{k\in \Z}\in \ell^2.\nonumber
\end{align}

Representation \eqref{comp} proves the claim \eqref{fi}. In particular,
\begin{equation}\label{fifi}\|PU_tP^\perp\|=\frac{\sin 2t}{2\pi}\|H_p\|_{\ell^2(\Z)}, \quad  t\in \left (0,\frac\pi2\right).
\end{equation}

Next,  the symbol $h_p$ of the convolution operator $H_p$ can be
computed explicitly and it is given by
$$
h_p(x)=
\frac{\pi}{\sin \pi p} e^{i\pi p(1-x) }= \sum_{m} \frac{e^{im\pi x}}{ m +p}, \quad  x\in   (0,2).
$$
Hence, the norm of $H_p$  in the  space $\ell^2(\Z)$ coincides with the $\ell^\infty$-norm of the symbol $h_p$ and therefore
\begin{equation}\label{ifif}
\|H_p\|_{\ell^2(\Z)}= \sup_{x\in [-1,1]}\left |\frac{\pi e^{i\pi p(1- x)}}{\sin \pi p}\right |=\frac{\pi}{\sin \pi p}=\frac{\pi}{\cos t},\quad t\in \left (0,\frac\pi2\right).
\end{equation}

Combining \eqref{fifi} with \eqref{ifif} yields the  lower bound \eqref{claim11}, that is,
\begin{equation}\label{sin}\|P_t-P\|\ge\|PU_tP^\perp\|= \frac{\sin 2t}{2\pi}\frac{\pi}{\cos t}=\sin t,\quad t\in \left (0,\frac\pi2\right).
\end{equation}

 Our next immediate goal is to prove the opposite inequality
\begin{equation}\label{claim12}
\|P_t-P\|\le \sin t,\quad t\in\left [0,\frac\pi2\right].
\end{equation}

Using the result of Lemma \ref{lem:resPiecSmth}, it is sufficient to prove that
\begin{equation}\label{claim22}
\|\dot P_t\|=1,\quad\,\, \quad t\in \left (0,\frac\pi2\right).
\end{equation}

In order to prove \eqref{claim22},
one  observes that
 $$
\|\dot P_t\|=\|U_t [2\hat x,P]U^*_t\|=2\|[\hat x,P]\|,\quad t\ge 0,
$$
with  $\hat x$ the multiplication operator by the independent variable,
$$(\hat xf)(x)=xf(x), \quad f\in L^2(-1,1).
$$
So,   $\|\dot P_t\|$ does not depend on the parameter $t$.  Therefore,
it remains  to show that
$$
 \|\dot P_0\|=1.
 $$

  Indeed,
 $$
 i^{-1}[2\hat x,P]=iP2\hat xP^\perp-iP^\perp 2\hat xP =iP 2\hat xP^\perp +(iP 2\hat xP^\perp)^*
$$
and hence the commutator $ i^{-1}[2\hat x,P]$ can be represented as the following off-diagonal self-adjoint operator matrix
with respect to the decomposition $L^2(-1,1)=\Ran P\oplus \Ran P^\perp$
$$
i^{-1}[2\hat x,P]=\begin{pmatrix}
0&V\\
V^*&0 \end{pmatrix}.
$$
Here the bounded  operator $V\in \cL(\Ran P^\perp, \Ran P)$, is given by
$$
V=i2P\hat x|_{\Ran P^\perp}.
$$

However, it follows from \eqref{sin} that
$$\|V\|=\left \|\lim_{t\downarrow 0}\frac{P(U_t-I)P^\perp}{it}\right \|=\left \|
\lim_{t\downarrow 0}\frac{PU_tP^\perp}{it}\right \|=\lim_{t\downarrow 0}\frac{\sin t}{t}=1.
$$
Hence,
$$\|[2\hat x,P]\|=\left \| \begin{pmatrix}
0&V\\
V^*&0 \end{pmatrix}\right \|=1.
$$

Combining \eqref{claim11}, \eqref{claim12} and \eqref{claim22}  proves that
the path $\left [0,\frac\pi2\right]\ni t \to P_t$ is geodesic, and hence \eqref{fin} holds.

\end{proof}

\section{New estimates in the subspace perturbation problem}\label{sec:newEstsubPertProb}

The main goal of this section is to apply  Theorem \ref{mainappl} to
the solution of the subspace perturbation problem recently discussed
in \cite{Kos1} and \cite{Kos3}.

Recall that if $A$ and $V$ are self-adjoint bounded operators and
$A$ has a spectral component $\omega$ separated from the rest of the
spectrum $\Omega$, then the spectrum of  $A+V$ still consists of two
separated parts, provided that $\norm{V}$ is small enough.
Due to the upper semicontinuity of the spectrum (cf., e.g., \cite[Theorem V.4.10]{Kato})
this is the case if the (in general sharp) condition $\|V\|<d/2$ with $d=\dist (\omega, \Omega)$
is satisfied.
Moreover,
if the perturbation $V$ is off-diagonal with respect to the
decomposition $\cH=\Ran \EE_A(\omega)\oplus \Ran \EE_A(\Omega)$, in
\cite{Kos1} it is shown, that the
optimal gap nonclosing condition is $\|V\|<\frac{\sqrt{3}}{2}d$ and
this conditions is sharp as well.




It is now a natural question, under what (possibly stronger)
condition on the norm of $V$  the difference of the spectral
projections for $A$ and $A+V$ associated with the corresponding
spectral components is a contraction with the norm  less than 1.


Our first application of Theorem \ref{mainappl} treats   the case of
arbitrary bounded self-adjoint perturbations $V$.

\begin{theorem}\label{thm:diag}
Assume that $A$ and $V$ are bounded self-adjoint operators. Suppose
that the spectrum of $A$ has a part $\omega$ separated from the
remainder of the spectrum $\Omega$ in the sense that
\begin{equation}\spec(A)=\omega \cup \Omega \quad \text{ and }\quad  \dist (\omega, \Omega)=d>0\,.
\end{equation}

If
$$
\|V\|<\frac{\sinh(1)}{e} d\,,
$$
then \begin{equation}\label{bounddiag} \| \EE_A
(\omega)-\EE_{A+V}\left (\cO_{d/2} (\omega)\right )\| \le \sin
\biggl( \frac\pi4 \log \frac{d}{d-2\|V\|}\biggr)<1,
\end{equation}
where $\cO_{d/2} (\omega)$ denotes the open $d/2$-neighborhood of $\omega$.
\end{theorem}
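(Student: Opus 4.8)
The plan is to connect $A$ to $A+V$ by the straight-line path $B_t = A + tV$, $t \in [0,1]$, and apply Theorem~\ref{mainappl} to this path. The first task is to verify that the hypotheses of Theorem~\ref{mainappl} hold, i.e.\ that for each $t \in [0,1]$ the spectrum of $B_t$ splits into two disjoint parts $\omega_t, \Omega_t$ depending upper semicontinuously on $t$, with $\dist(\omega_t,\Omega_t) > 0$. Here I would take $\omega_t := \spec(B_t) \cap \cO_{d/2}(\omega)$ and $\Omega_t := \spec(B_t) \setminus \cO_{d/2}(\omega)$. By upper semicontinuity of the spectrum (\cite[Theorem V.4.10]{Kato}) and the bound $\|tV\| \le \|V\| < \frac{\sinh(1)}{e}d$, one needs $\frac{\sinh 1}{e} \le \frac12$, which indeed holds since $\sinh 1 = (e - e^{-1})/2 < e/2$; thus $\spec(B_t)$ stays within the $d/2$-neighborhood of $\spec(A) = \omega \cup \Omega$, so no spectrum can cross the ``midline'' and the splitting persists with $\omega_1 \subset \cO_{d/2}(\omega)$, giving $P_1 = \EE_{A+V}(\cO_{d/2}(\omega))$ and $P_0 = \EE_A(\omega)$.

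The second task is to get a uniform lower bound on $\dist(\omega_\tau, \Omega_\tau)$ along the path. Since every point of $\omega_\tau$ lies within $\tau\|V\| \le \|V\|$ of $\omega$ and every point of $\Omega_\tau$ lies within $\tau\|V\|$ of $\Omega$, the triangle inequality gives
\[
\dist(\omega_\tau, \Omega_\tau) \ge d - 2\tau\|V\| \ge d - 2\|V\| > 0\,.
\]
Actually, to get the sharper logarithmic bound in \eqref{bounddiag} I should keep the $\tau$-dependence: $\dist(\omega_\tau,\Omega_\tau) \ge d - 2\tau\|V\|$. With $\dot B_\tau = V$, so $\|\dot B_\tau\| = \|V\|$, Theorem~\ref{mainappl} (applied with the path restricted to $[0,1]$, and noting $\|V\| < \frac{\sinh 1}{e}d$ is exactly what makes the right-hand side integral less than $\pi/2$, see below) yields
\[
\arcsin(\|P_1 - P_0\|) \le \frac\pi2 \int_0^1 \frac{\|V\|}{d - 2\tau\|V\|}\, d\tau
= \frac\pi2 \cdot \left(-\frac14\right)\log(d - 2\tau\|V\|)\Big|_{\tau=0}^{\tau=1}
= \frac\pi4 \log\frac{d}{d - 2\|V\|}\,.
\]

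The third task is just to check that this upper bound is strictly less than $\pi/2$, which is what makes \eqref{bounddiag} a genuine contraction estimate and also legitimizes the application of the theorem on the full interval. This amounts to $\frac14\log\frac{d}{d-2\|V\|} < \frac12$, i.e.\ $\frac{d}{d-2\|V\|} < e^2$, i.e.\ $d - 2\|V\| > d e^{-2}$, i.e.\ $\|V\| < \frac{d}{2}(1 - e^{-2}) = \frac{d}{2}\cdot\frac{e^2 - 1}{e^2} = \frac{d(e - e^{-1})}{2e} = \frac{\sinh(1)}{e}d$, which is precisely the hypothesis. Taking $\sin$ of both sides of the displayed inequality (legitimate since $\sin$ is increasing on $[0,\pi/2]$ and $\arcsin(\|P_1-P_0\|) \le \frac\pi4\log\frac{d}{d-2\|V\|} < \frac\pi2$) gives \eqref{bounddiag}. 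The only real subtlety—and the step I would be most careful about—is the spectral-splitting/upper-semicontinuity argument: one must confirm that the family $\{\omega_\tau\}$ defined by intersecting with a fixed neighborhood really is upper semicontinuous and that the two parts stay strictly separated, so that Theorem~\ref{mainappl} applies verbatim; everything after that is the elementary integration above.
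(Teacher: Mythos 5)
Your proof is correct and takes essentially the same approach as the paper: you introduce the straight-line path $B_t = A + tV$, cut the spectrum of $B_t$ along the fixed $d/2$-neighborhoods to define upper semicontinuous families of spectral parts, bound $\dist(\omega_\tau,\Omega_\tau)\ge d-2\tau\|V\|$, apply Theorem~\ref{mainappl}, and integrate — the only cosmetic difference being your definition $\Omega_t=\spec(B_t)\setminus\cO_{d/2}(\omega)$ versus the paper's $\spec(B_t)\cap\cO_{d/2}(\Omega)$, which agree since $\spec(B_t)\subset\cO_{d/2}(\omega)\cup\cO_{d/2}(\Omega)$. The upper-semicontinuity of the split families that you flag as the delicate point is handled in the paper by citing \cite[Theorem IV.3.16]{Kato}.
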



\begin{proof} Introduce the path
$$I=[0,1]\ni t \mapsto B_t=A+tV$$ and set
\begin{equation}\label{eq:defSpecPartsDiag}
    \omega_t:= \spec(B_t)\cap\cO_{ d/2}(\omega)\quad\text{ and } \quad
    \Omega_t:= \spec(B_t)\cap\cO_{d/2}(\Omega)\,,\quad t\in I\,.
\end{equation}
Since
$$
\|V\|<\frac{ \sinh(1)}{e}d<\frac{d}{2}\,,
$$
by \cite[Theorem V.4.10]{Kato} the families $\{\omega_t\}_{t\in I}$ and $\{\Omega_t\}_{t\in I}$
are separated with the distance function $d(t)$ satisfying the estimate
$$
d(t):=\dist (\omega_t, \Omega_t)\ge d-2t \|V\|>0\,, \quad t\in I\,.
$$
Moreover, these families are also upper semicontinuous   on $I$ (cf.,  \cite[Theorem IV.3.16]{Kato} ). Since the path $I\ni t  \mapsto B_t$
is obviously a $C^1$-smooth path  (in fact, it  is real analytic), from Theorem \ref{mainappl} it follows that
\begin{equation}\label{poch}
\arcsin\bigl(\|\EE_A(\omega)-\EE_{B_t}(\omega_t)\|\bigr)\le \frac\pi2\int_0^t \frac{\|V\|}{d-2\tau \|V\|}d\tau\,,
\quad t\in [0,1)\,.
\end{equation}
Observing that
\begin{equation}\label{dob}
\int_0^1 \frac{\|V\|}{d-2\tau \|V\|}d\tau=\frac12\log \left ( \frac{d}{d-2 \|V\|}\right )
\end{equation}
and
$$
\frac12\log \left ( \frac{d}{d-2 \|V\|}\right )<
\frac12\log \left ( \frac{1}{1-2 \frac{\sinh(1)}{e}}\right )=1\,,
$$
from \eqref{poch} (by going to the limit when  $t$ approaches $ 1$) one gets the estimate
\begin{align*}
\|\EE_A(\omega)-\EE_{B_1}(\omega_1)\|
= \sin\biggl(
\frac\pi4
\log \frac{d}{d-2\|V\|}\biggr).
\end{align*}

To complete the proof it remains to observe that
$B_1=A+V
$
and that
$$
\EE_{A+V}\left (\cO_{d/2} (\omega)\right )=\EE_{B_1} (\omega_1)
$$
as it follows from \eqref{eq:defSpecPartsDiag}.
\end{proof}


Our second application of Theorem \ref{mainappl} concerns  the case
of off-diagonal perturbations where the corresponding  spectral
shift is rather specific. In that case, the additional knowledge
about the behavior of the spectral parts from \cite{Kos1} gives rise
to a stronger estimate compared to that in  Theorem \ref{thm:diag}.
\begin{theorem}\label{thm:offdiag}
Assume the hypothesis of Theorem \ref{thm:diag}. Suppose, in addition,  that
$V$ is off-diagonal with respect to the orthogonal decomposition $
\cH=\Ran \EE_A(\omega)\oplus\Ran \EE_A(\Omega), $ that is,
$$
\EE_A(\omega)V\EE_A(\omega)=\EE_A(\Omega)V\EE_A(\Omega)=0\,.
$$
Suppose further that
\begin{equation}\label{sd}
\|V\|< \fs d\,,
\end{equation}
where $\fs$ is the unique root of the equation
\begin{equation}\label{rroo}
\int_0^\fs\frac{d\tau}{2-\sqrt{1+4\tau^2}}=1\,.
\end{equation}

Then
\begin{equation}\label{newoff}
\| \EE_A (\omega)-\EE_{A+V}\left (\cO_{d/2} (\omega)\right )\| \le
\sin \biggl( \frac\pi2
\int_0^{\|V\|/d}\frac{d\tau}{2-\sqrt{1+4\tau^2}} \biggr)<1.
\end{equation}
\end{theorem}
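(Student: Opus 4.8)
The plan is to follow the proof of Theorem~\ref{thm:diag} almost verbatim, the only new ingredient being that off-diagonality of $V$ lets one replace the crude gap bound $d-2t\|V\|$ by the sharp off-diagonal one from \cite{Kos1}. So first I would introduce the real-analytic path $I=[0,1]\ni t\mapsto B_t:=A+tV$ and set
\[
 \omega_t:=\spec(B_t)\cap\cO_{d/2}(\omega),\qquad \Omega_t:=\spec(B_t)\cap\cO_{d/2}(\Omega),\qquad t\in I.
\]
Since $tV$ is again off-diagonal with respect to $\cH=\Ran\EE_A(\omega)\oplus\Ran\EE_A(\Omega)$ and $\|tV\|\le\|V\|<\fs d<\frac{\sqrt3}{2}d$ for all $t\in I$, the optimal off-diagonal gap non-closing estimate of \cite{Kos1} applies to each $B_t$: the spectrum of $B_t$ splits into two parts, one within distance $\frac{d}{2}\bigl(\sqrt{1+4t^2\|V\|^2/d^2}-1\bigr)<\frac{d}{2}$ of $\omega$ and the other the analogous distance from $\Omega$. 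Consequently $\spec(B_t)=\omega_t\cup\Omega_t$ is a genuine partition into separated pieces with
\[
 \dist(\omega_t,\Omega_t)\ \ge\ 2d-\sqrt{d^2+4t^2\|V\|^2}\ >\ 0,\qquad t\in I,
\]
and upper semicontinuity of $\{\omega_t\}_{t\in I}$ and $\{\Omega_t\}_{t\in I}$ follows from \cite[Theorem IV.3.16]{Kato}, exactly as in Theorem~\ref{thm:diag}.

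Next I would apply Theorem~\ref{mainappl} (with $\dot B_\tau=V$, hence $\|\dot B_\tau\|=\|V\|$), which gives for $t\in[0,1)$
\[
 \arcsin\bigl(\|\EE_A(\omega)-\EE_{B_t}(\omega_t)\|\bigr)\ \le\ \frac\pi2\int_0^t\frac{\|V\|}{\dist(\omega_\tau,\Omega_\tau)}\,d\tau\ \le\ \frac\pi2\int_0^t\frac{\|V\|}{2d-\sqrt{d^2+4\tau^2\|V\|^2}}\,d\tau .
\]
The substitution $u=\tau\|V\|/d$ converts the last integral into $\int_0^{t\|V\|/d}\frac{du}{2-\sqrt{1+4u^2}}$, which is finite because $t\|V\|/d\le\|V\|/d<\fs$ stays away from the singularity $u=\frac{\sqrt3}{2}$ of the integrand. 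Letting $t\uparrow1$ (as in Theorem~\ref{thm:diag}) and noting $\EE_{A+V}(\cO_{d/2}(\omega))=\EE_{B_1}(\omega_1)$ yields
\[
 \arcsin\bigl(\|\EE_A(\omega)-\EE_{A+V}(\cO_{d/2}(\omega))\|\bigr)\ \le\ \frac\pi2\int_0^{\|V\|/d}\frac{d\tau}{2-\sqrt{1+4\tau^2}}\ <\ \frac\pi2\int_0^{\fs}\frac{d\tau}{2-\sqrt{1+4\tau^2}}=\frac\pi2 ,
\]
where the strict inequality uses $\|V\|/d<\fs$ together with positivity of the integrand, and the last equality is the defining relation \eqref{rroo} of $\fs$. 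Applying $\sin$ to both sides gives \eqref{newoff}.

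The change of variables, the solution of the differential inequality packaged inside Theorem~\ref{mainappl}, and the limiting step $t\uparrow1$ are all routine and identical to the argument for Theorem~\ref{thm:diag}. The one genuine obstacle is the sharp off-diagonal spectral enclosure: I must extract from \cite{Kos1} not merely that the gap stays open for $\|tV\|<\frac{\sqrt3}{2}d$, but the precise quantitative statement that each spectral component of $B_t$ is caught inside the corresponding open $d/2$-neighborhood with $\dist(\omega_t,\Omega_t)\ge 2d-\sqrt{d^2+4t^2\|V\|^2}$ --- this is exactly the input that produces the integrand $1/(2-\sqrt{1+4\tau^2})$ and hence the constant $\fs$.
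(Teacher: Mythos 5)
Your proof is correct and follows essentially the same route as the paper: introduce the path $B_t=A+tV$, invoke the sharp off-diagonal spectral enclosure from \cite[Theorem 1.3]{Kos1} to obtain $\dist(\omega_t,\Omega_t)\ge 2d-\sqrt{d^2+4t^2\|V\|^2}$, apply Theorem~\ref{mainappl}, and conclude by the change of variables and the defining relation \eqref{rroo} for $\fs$. The paper expresses $\delta_{tV}$ in the equivalent form $t\|V\|\tan\bigl(\tfrac12\arctan\tfrac{2t\|V\|}{d}\bigr)$ and records the algebraic identity with $2-\sqrt{1+4\tau^2}$ explicitly, but the argument is otherwise identical.
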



\begin{proof} As in the proof of Theorem \ref{thm:diag}, introduce the path
$I=[0,1]\ni t \mapsto B_t=A+tV$ and the  sets
\begin{equation}\label{eq:defSpecParts}
    \omega_t:= \spec(B_t)\cap\cO_{ d/2}(\omega)\quad\text{ and } \quad
    \Omega_t:= \spec(B_t)\cap\cO_{d/2}(\Omega)\,,\quad t\in I\,.
\end{equation}
Since the improper integral
$$
\int_0^{\frac{\sqrt{3}}{2}}\frac{d\tau}{2-\sqrt{1+4\tau^2}}=\infty
$$
diverges, the root $\fs$ of \eqref{rroo} is well-defined and less than $\frac{\sqrt{3}}{2}$ and hence
\begin{equation}\label{ddff}
\|V\|<\frac{\sqrt{3}}{2}d\,,
\end{equation}
as it follows from \eqref{sd}.

By \cite[Theorem 1.3]{Kos1}, under the condition \eqref{ddff} it is
$$
\spec(B_1)\cap \cO_{d/2}(\omega)=\spec(B_1)\cap \cU_{\delta_V}(\omega)
$$
and also
$$
\spec(B_1)\cap \cO_{d/2}(\Omega)=\spec(B_1)\cap \cU_{\delta_V}(\Omega)\,,
$$
where
$$
\delta_V=\|V\| \tan
\biggl( \frac12 \arctan \frac{2\|V\|}{d}\biggr)\,,
$$
and $U_{\delta_V}(\Delta)$ denotes the closed $\delta_V$-neighborhood of the Borel set $\Delta\subset\R$.
Therefore
\begin{equation}
    \omega_t= \spec(B_t)\cap\cU_{\delta_{tV}}(\omega)\quad\text{ and } \quad
    \Omega_t= \spec(B_t)\cap\cU_{\delta_{tV}}(\Omega)\,,\quad t\in I\,.
\end{equation}

In particular,  the families $\{\omega_t\}_{t\in I}$ and $\{\Omega_t\}_{t\in I}$
are separated with the distance function $d(t)$ satisfying the estimate
\begin{align*}
d(t):&=\dist (\omega_t, \Omega_t)\ge d-2t \|V\|\tan
\biggl( \frac12 \arctan \frac{2t\|V\|}{d}\biggr)\\
&=\left (2-\sqrt{1+4\left (\frac{t\|V\|}{d}\right )^2}\right ) d >0\,, \quad t\in I\,.
\end{align*}

Using a similar argument as in the proof of Theorem \ref{thm:diag} and applying Theorem \ref{mainappl},
one gets the estimate
\begin{align*}
\arcsin\bigl(\|\EE_A(\omega)-\EE_{B_1}(\omega_1)\|\bigr)
&\le
\frac\pi2 \frac{\|V\|}{d}\int_0^1\frac{d\tau}{
2-\sqrt{1+4\left (\frac{\|V\|\tau}{d}\right )^2}}\\
&=
\frac\pi2 \int_0^{\frac{\|V\|}{d}}\frac{d\tau}{
2-\sqrt{1+4\tau^2}}\,.
\end{align*}
By \eqref{sd} and \eqref{rroo} it is
$$
\int_0^{\frac{\|V\|}{d}}\frac{d\tau}{
2-\sqrt{1+4\tau^2}}<\int_0^\fs\frac{d\tau}{
2-\sqrt{1+4\tau^2}}=1\,,
$$
and one arrives at the estimate
$$
\|\EE_A(\omega)-\EE_{A+V}\left (\cO_{d/2}(\omega)\right )\|
=\|\EE_A(\omega)-\EE_{B_1}(\omega_1)\|
\le \sin \biggl(\frac\pi2\int_0^{\frac{\|V\|}{d}}\frac{d\tau}{
2-\sqrt{1+4\tau^2}}\biggr)\,,
$$
which proves \eqref{newoff}.
\end{proof}


\begin{remark} In the situation of Theorem \ref{thm:diag}, the previously known estimate
obtained in \cite{Kos3} under the assumption
$$
\|V\|<\frac{2}{2+\pi}d
$$
has the form
\begin{equation}\label{old}
\| \EE_A (\omega)-\EE_{A+V}\left (\cO_{d/2} (\omega)\right )\|\le \frac\pi2\frac{\|V\|}{d-\|V\|}
\end{equation}
and one can show (see Appendix \ref{app:proofNewVSOld}) that the estimate \eqref{bounddiag} is stronger
than \eqref{old}, i.e.\
\begin{equation}\label{eq:newVSOld}
    \sin \left (
\frac\pi4
\log \frac{d}{d-2\|V\|}\right )<
 \frac\pi2\frac{\|V\|}{d-\|V\|} \qquad \text{whenever}\quad 0<\norm{V}<\frac d2\,.
\end{equation}
In the off-diagonal case of Theorem \ref{thm:offdiag},
the previously known estimate
obtained in \cite{Kos1} under the assumption
\begin{equation}\label{sskk}
\|V\|<\frac{3\pi -\sqrt{\pi^2+32}}{\pi^2-4}d
\end{equation}
has the form
\begin{equation}\label{oldoff}
\| \EE_A (\omega)-\EE_{A+V}\left (\cO_{d/2} (\omega)\right )\|\le \frac\pi2\frac{\|V\|}{d-\|V\|\tan \left (\frac12\arctan \frac{2\|V\|}{d}\right )}\,.
\end{equation}
Recall that the critical constant $c_\pi=\frac{3\pi -\sqrt{\pi^2+32}}{\pi^2-4}$ in \eqref{sskk} was chosen to be the only positive root of the equation
$$
\frac\pi2\frac{x}{1-x\tan \left (\frac12\arctan 2x\right )}=1
$$
and therefore in the critical case  $\|V\|=c_\pi d$ the right hand side of \eqref{oldoff} turns out to be 1 which means that the bound \eqref{oldoff} is not informative for the range of perturbations $\|V\|$ such that $\|V\|\ge c_\pi d$.

Note, that the identity
\[
 \frac{1}{2-\sqrt{1+4\tau^2}} = \frac{1}{1-2\tau\tan\bigl(\frac{1}{2}\arctan 2\tau\bigr)}
\]
holds for all $0\le\tau<\frac{\sqrt{3}}{2}$. Combined with the inequality
\begin{equation}\label{notvery}
\sin \left ( \frac\pi2 \int_0^t \frac{d\tau}{1-{2\tau} \tan \left (\frac12\arctan 2\tau\right )}\right )<
\frac\pi2
\frac{t}{1-{t} \tan \left (\frac12\arctan 2t\right )}\,,
\end{equation}
$$0\le t<\frac{\sqrt{3}}{2}\,,
$$
which is proven in Appendix \ref{app:proofnotvery}, this means that
  the right hand side of \eqref{newoff} is less than the right hand side of
\eqref{oldoff}
and therefore
the bound \eqref{newoff} is stronger than the previously known estimate \eqref{oldoff}.
In particular, since the critical constant $c_\pi$ was defined so that the right hand side of \eqref{oldoff} equals 1 for $\norm{V}=c_\pi d$,
it follows immediately from \eqref{notvery} that $c_\pi<\fs$. Numerical calculations suggest that the exact value of $\fs$ satisfies the
two-sided estimate
\[
    0{.}67598931 < \fs < 0{.}67598932\,.
\]
\end{remark}

\begin{appendix}

\section{Proof of inequality \eqref{eq:newVSOld}}\label{app:proofNewVSOld}
We write $\norm{V}=\alpha d$ and substitute $x=\frac{\alpha}{1-\alpha}$, $0<x<1$. With $\frac{1}{1-2\alpha}=\frac{1+x}{1-x}$
inequality \eqref{eq:newVSOld} becomes
\begin{equation}\label{eq:newVSOld2}
 \sin\biggl(\frac{\pi}{4}\log \frac{1+x}{1-x}\biggr) < \frac{\pi}{2} x\,,\quad 0<x<1\,.
\end{equation}
Since the left-hand side of \eqref{eq:newVSOld2} is not greater than 1, we may assume $x\le \frac2\pi$. In that case,
\eqref{eq:newVSOld2} can be rewritten as
\begin{equation}\label{eq:arcsinOldNew}
 \frac{\pi}{4} \log \frac{1+x}{1-x} < \arcsin\Bigl(\frac{\pi}{2} x\Bigr)\,,\quad 0<x\le\frac2\pi\,.
\end{equation}
It suffices to show that the corresponding inequality holds for the derivatives of both sides. Differentiating the left-hand side
gives
\begin{equation*}
 \frac{d}{dx} \frac\pi4\log \frac{1+x}{1-x} = \frac\pi2\cdot \frac{1}{(1-x^2)}
\end{equation*}
and differentiating the right-hand side gives
\begin{equation*}
 \frac{d}{dx} \arcsin\Bigl(\frac{\pi}{2}x\Bigr) = \frac{\pi}{\sqrt{4-\pi^2x^2}}\,.
\end{equation*}
Therefore, we have to show that
\begin{equation}\label{eq:diffIneq}
 \frac\pi2\cdot \frac{1}{(1-x^2)} < \frac{\pi}{\sqrt{4-\pi^2x^2}}
\end{equation}
holds for all $0<x\le\frac2\pi$. Taking the square, we can rewrite \eqref{eq:diffIneq} as
\[
 4-\pi^2x^2 < 4 (1-x^2)^2
\]
which is equivalent to
\[
 0 < (\pi^2 - 2)x^2 + x^4\,,\quad 0 < x \le \frac2\pi\,.
\]
Since $\pi^2 > 2$, this is obviously true, so \eqref{eq:diffIneq} holds for all $0<x\le\frac2\pi$, which proves
\eqref{eq:newVSOld}.

\section{ Proof of  inequality \eqref{notvery}}\label{app:proofnotvery}

First, we remark that
$$
1-{2x} \tan \left (\frac12\arctan 2x\right )=2-\sqrt{1+4x^2}
$$
and that
$$
1-{x} \tan \left (\frac12\arctan 2x\right )=\frac32-\frac{\sqrt{1+4x^2}}{2}
$$
for $0\le x<\frac{\sqrt{3}}{2}$,
and thus the inequality \eqref{notvery} can be rewritten as
\begin{equation}\label{notvery1}
\sin \biggl( \frac\pi2 \int_0^t \frac{d\tau}{2-\sqrt{1+4\tau^2}}\biggr)<
\frac\pi2
\frac{2t}{3-\sqrt{1+4t^2}}\,.
\end{equation}

It is sufficient to prove the corresponding inequality  for the derivatives that, after elementary  computations, can be written as
\begin{equation}\label{iter} \cos \left ( \frac\pi2 \int_0^t \frac{d\tau}{2-\sqrt{1+4\tau^2}}\right )<2\left [
\frac{2-\sqrt{1+4t^2}} { 3-\sqrt{1+4t^2}}+\frac{4t^2( 2-\sqrt{1+4t^2})} {( 3-\sqrt{1+4t^2})^2\sqrt{1+4t^2}}
\right ].\end{equation}

After the  change of variables $x=\sqrt{1+4t^2}$, so that $t^2=\frac{x^2-1}{4}$,
the desired estimate may be rewritten as
$$ \cos \biggl( \frac\pi2 \int_0^{\frac{\sqrt{x^2-1}}{2}} \frac{d\tau}{2-\sqrt{1+4\tau^2}}\biggr)<
F(x)\,, \quad 1<x<2\,,
$$ where the function $F$ is given by
$$
F(x)=\frac{2(3x-1)(2-x)} {( 3-x)^2x}\,.
$$
Denote by $\fx$ the first root of the equation
$$
\cos \left ( \frac\pi2 \int_0^{\frac{\sqrt{x^2-1}}{2}} \frac{d\tau}{2-\sqrt{1+4\tau^2}}\right )=
F(x)
$$
that is greater than $1$.

An elementary analysis shows that the  equation
$1=F(x)
$
has three roots
$$
x_1=-\frac{\sqrt{17}+1}{2}\,, \quad x_2=1 \quad \text{ and }\quad x_3=\frac{\sqrt{17}-1}{2}\,,
$$
and that
$$
1<F(x) \quad \text{ on } \quad  (1,x_3)\,.
$$
Therefore,
$$
\fx>x_3=\frac{\sqrt{17}-1}{2}
$$
and thus, we have proven the inequality
\begin{equation}\label{notvery11}
\sin \biggl( \frac\pi2 \int_0^t \frac{d\tau}{2-\sqrt{1+4\tau^2}}\biggr)<
\frac\pi2
\frac{2t}{3-\sqrt{1+4t^2}} \quad \text{on the interval }\quad (0, \ft)\,,
\end{equation}
where $\ft$ is given by
$$
\ft=\frac{\sqrt{x_3^2-1}}{2}=\frac{\sqrt{(\sqrt{17}-1)^2-4}}{4}\approx
0.599...\,\,.
$$

Next, we show that the right  hand side of \eqref{notvery11}  at the point $t=\ft$ is greater than $1$. Indeed,
\begin{align}
\frac\pi2
\frac{2\ft}{3-\sqrt{1+4\ft^2}}&=\frac\pi2\frac{2\frac{\sqrt{(\sqrt{17}-1)^2-4}}{4}}{3-\frac{\sqrt{17}-1}{2}}
>\pi \frac{\frac{\sqrt{(\sqrt{16}-1)^2-4}}{4}}{3-\frac{\sqrt{17}-1}{2}}
=\pi \frac{\sqrt{5}}{14-2\sqrt{17}}
\label{vvbb} \\&>\pi\frac{\sqrt{5}}{5}=\frac{\pi}{\sqrt{5}}>1,
\nonumber\end{align}
where we used the obvious inequality
$2\sqrt{17}<9.
$


From that it follows, that the right hand side of \eqref{notvery} is greater than $1$ for $t\ge \ft$ and one concludes that
\begin{equation}\label{notvery3}
\sin \biggl( \frac\pi2 \int_0^t \frac{d\tau}{2-\sqrt{1+4\tau^2}}\biggr)<
\frac\pi2
\frac{2t}{3-\sqrt{1+4t^2}} \quad \text{for all }\ t\in\left  (0, \frac{\sqrt{3}}{2}\right )\,.
\end{equation}
We remark that at $t=\frac{\sqrt{3}}{2}$  the right hand side of \eqref{notvery3} blows up.

\section{Alternative Proof of Theorem \ref{mainappl}}\label{app:altProof}
In this appendix we present an alternative proof of Theorem
\ref{mainappl}.

 We make the following preparations.
\begin{lemma}\label{lem:specDistCont}
    Under the assumptions of Theorem \ref{mainappl} we define $d\colon I\times I\to\R_0^+$ by
    \[
        d(t,s):=\dist(\omega_t,\Omega_s)\,.
    \]
    Then for every $t\in I$ one has
    \[
        \lim_{s\to t}d(t,s)=\lim_{s\to t}d(s,t)=d(t,t)\,.
    \]
    Furthermore, the function given by $t\mapsto d(t,t)$, $t\in I$, is continuous.
    \begin{proof}
        Let $t\in I$ and let $0<\varepsilon<\frac{1}{4}d(t,t)$ be arbitrary. Since the families $\{\omega_s\}_{s\in I}$ and
        $\{\Omega_s\}_{s\in I}$ are upper semicontinuous, we can choose $\delta>0$ such that
        \[
         \omega_s\subset \cO_\eps(\omega_t)\,,\quad \Omega_s\subset \cO_\eps(\Omega_t)\,,\quad s\in I\,,\ \abs{s-t}<\delta\,,
        \]
        as well as
        \[
         \omega_t\subset \cO_\eps(\omega_s)\,,\quad \Omega_t\subset \cO_\eps(\Omega_s)\,,\quad s\in I\,,\ \abs{s-t}<\delta\,,
        \]
        where $\cO_\eps(\Delta)$ denotes the open $\eps$-neighborhood of $\Delta\subset\R$.
        From that one obtains
        \[
            d(t,t)-2\varepsilon \le d(s,s)\le d(t,t)+2\varepsilon
        \]
        and
        \[
            d(t,t)-\varepsilon \le d(t,s)\le d(t,s)+\varepsilon
        \]
        for all $s\in I$ such that $\abs{s-t}<\delta$. The same is true for $d(s,t)$ instead
        of $d(t,s)$, which completes the proof.
    \end{proof}%
\end{lemma}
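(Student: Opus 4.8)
The plan is to deduce everything from a single strengthening of the hypothesis: under the assumptions of Theorem~\ref{mainappl} \emph{both} set-valued maps $t\mapsto\omega_t$ and $t\mapsto\Omega_t$ are continuous for the Hausdorff distance $d_{\mathrm H}$ on compact subsets of $\R$, not merely upper semicontinuous. Since $B_t$ is bounded, $\spec(B_t)$ is compact, and the closed sets $\omega_t,\Omega_t\subset\spec(B_t)$ are therefore compact; in particular each $d(t,s)=\dist(\omega_t,\Omega_s)$ is attained. Granting Hausdorff continuity of both families, the elementary estimate
\[
 \bigl|\dist(\omega_t,\Omega_s)-\dist(\omega_{t'},\Omega_{s'})\bigr| \le d_{\mathrm H}(\omega_t,\omega_{t'}) + d_{\mathrm H}(\Omega_s,\Omega_{s'})
\]
makes $(t,s)\mapsto d(t,s)$ jointly continuous; restricting to $s\to t$ with $t$ fixed gives $\lim_{s\to t}d(t,s)=\lim_{s\to t}d(s,t)=d(t,t)$, and restricting to the diagonal gives continuity of $t\mapsto d(t,t)$.

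The substantive step is upgrading upper semicontinuity to two-sided continuity, for which I would use three ingredients: (a) continuity of $t\mapsto B_t$, which yields Hausdorff continuity of the \emph{full} spectrum via $d_{\mathrm H}(\spec(B_s),\spec(B_t))\le\norm{B_s-B_t}$; (b) upper semicontinuity of \emph{both} $\{\omega_t\}_{t\in I}$ and $\{\Omega_t\}_{t\in I}$ in the sense of \eqref{eq:addSpecProp}; and (c) the strict separation $d(t,t)=\dist(\omega_t,\Omega_t)>0$. Fix $t\in I$ and $0<\varepsilon<\frac14 d(t,t)$, and use (a), (b) to pick $\delta>0$ with $\norm{B_s-B_t}<\varepsilon$, $\omega_s\subset\cO_\varepsilon(\omega_t)$ and $\Omega_s\subset\cO_\varepsilon(\Omega_t)$ whenever $|s-t|<\delta$. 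To obtain the missing inclusion $\omega_t\subset\cO_\varepsilon(\omega_s)$, take $\lambda\in\omega_t\subset\spec(B_t)$; by (a) there is $\nu\in\spec(B_s)=\omega_s\cup\Omega_s$ with $|\lambda-\nu|<\varepsilon$. Were $\nu\in\Omega_s$, then $\Omega_s\subset\cO_\varepsilon(\Omega_t)$ would force $\dist(\lambda,\Omega_t)<2\varepsilon<d(t,t)$, contradicting $\lambda\in\omega_t$ and $\dist(\omega_t,\Omega_t)=d(t,t)$; hence $\nu\in\omega_s$ and $\dist(\lambda,\omega_s)<\varepsilon$. Interchanging the roles of $\omega$ and $\Omega$ yields $\Omega_t\subset\cO_\varepsilon(\Omega_s)$ as well, so all four $\varepsilon$-inclusions hold for $|s-t|<\delta$; this is precisely Hausdorff continuity of both families at $t$.

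From these four inclusions one can also read off the quantitative statement directly, avoiding any appeal to the general fact that set-distance is Hausdorff continuous. Indeed $\Omega_s\subset\cO_\varepsilon(\Omega_t)$ gives $d(t,s)\ge d(t,t)-\varepsilon$, while $\Omega_t\subset\cO_\varepsilon(\Omega_s)$ together with a minimizing pair $\lambda_0\in\omega_t$, $\mu_0\in\Omega_t$ with $|\lambda_0-\mu_0|=d(t,t)$ gives $d(t,s)\le d(t,t)+\varepsilon$; symmetrically, using $\omega_s\subset\cO_\varepsilon(\omega_t)$ and $\omega_t\subset\cO_\varepsilon(\omega_s)$, one gets $d(t,t)-\varepsilon\le d(s,t)\le d(t,t)+\varepsilon$, and using all four inclusions at once, $|d(s,s)-d(t,t)|\le2\varepsilon$. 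Since $\varepsilon\in(0,\frac14 d(t,t))$ was arbitrary (with $\delta$ shrinking accordingly), letting $\varepsilon\downarrow0$ yields the claimed limits and the continuity of $t\mapsto d(t,t)$.

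I expect the only genuinely delicate point to be the reverse-inclusion step in the second paragraph: upper semicontinuity of the two families by itself does not prevent a family from collapsing, and so does not give the reverse inclusions; it is essential to feed in the continuity of $t\mapsto B_t$ through Hausdorff continuity of the entire spectrum, and then to use the positive separation $\dist(\omega_t,\Omega_t)>0$ to decide that a spectral point of $B_s$ lying near a point of $\omega_t$ must belong to $\omega_s$ rather than to $\Omega_s$. Everything else is triangle-inequality bookkeeping.
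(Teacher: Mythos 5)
Your proof is correct, and it follows the same structural plan as the paper's proof (derive the four $\varepsilon$-inclusions at a point $t$, then close with triangle-inequality bookkeeping). The substantive difference is in how the ``reverse'' inclusions $\omega_t\subset\cO_\eps(\omega_s)$ and $\Omega_t\subset\cO_\eps(\Omega_s)$ with a $\delta$ uniform in $s$ are obtained. The paper simply asserts that all four inclusions follow ``since the families are upper semicontinuous.'' But with the paper's definition of upper semicontinuity (the one-sided condition $\rho(\omega_s,\omega_t)=\sup_{\lambda\in\omega_s}\dist(\lambda,\omega_t)<\varepsilon$ at the fixed reference point $t$), only the forward inclusions $\omega_s\subset\cO_\eps(\omega_t)$, $\Omega_s\subset\cO_\eps(\Omega_t)$ are immediate; the reverse inclusions are upper semicontinuity \emph{at the varying point $s$}, for which the admissible $\delta$ depends on $s$, and nothing in the bare definition rules out collapse (e.g.\ $\omega_s=\{0\}$ for $s\ne0$ and $\omega_0=\{0,1\}$ is upper semicontinuous at $0$, yet $\omega_0\not\subset\cO_\eps(\omega_s)$). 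You correctly identify that the reverse inclusions must instead be extracted from the context of Theorem \ref{mainappl}: Hausdorff continuity of the full spectrum via $d_{\mathrm H}(\spec(B_s),\spec(B_t))\le\norm{B_s-B_t}$ for self-adjoint operators, together with the positive gap $\dist(\omega_t,\Omega_t)>0$, which forces a spectral point of $B_s$ that is $\eps$-close to a point of $\omega_t$ to lie in $\omega_s$ rather than $\Omega_s$. Your dichotomy argument for this step is exactly right, as is the subsequent remark that compactness of $\omega_t,\Omega_t$ (closed subsets of the compact $\spec(B_t)$) guarantees minimizers, making the upper-bound estimate $d(t,s)\le d(t,t)+\eps$ legitimate. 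In short: same skeleton as the paper, but you make explicit (and correctly justify) the one step the paper glosses over.
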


\begin{proposition}[\cite{Mc}]\label{mcen} Let $A$ and $B$ be bounded self-adjoint operators and $\omega$ and $\Omega$ two Borel sets on the real line. Then
$$
\dist (\omega, \Omega)\|\EE_A (\omega)\EE_B(\Omega)\|\le \frac\pi2 \|A-B\|.
$$ Moreover, if the convex hull of the set $\omega$ does not intersect the set $\Omega$, or vice versa,  then
$$
\dist (\omega, \Omega)\|\EE_A  ( \omega)\EE_B(\Omega)\|\le  \|A-B\|.
$$
\end{proposition}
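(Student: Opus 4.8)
The plan is to deduce both estimates from a single integral representation of the operator $\EE_A(\omega)\EE_B(\Omega)$, built in the spirit of formula \eqref{trans}. We may assume $d:=\dist(\omega,\Omega)>0$, since otherwise there is nothing to prove.

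First I would fix a function $g\in L^1(\R)$, continuous except possibly at the origin, whose Fourier transform $\widehat g(\nu)=\int_\R e^{-is\nu}g(s)\,ds$ satisfies $\widehat g(\nu)=1/\nu$ for all real $\nu$ with $|\nu|\ge d$, chosen moreover so that $\norm{g}_{L^1(\R)}=c/d$, where $c=\inf\{\norm{g}_{L^1(\R)}:\widehat g(\nu)=1/\nu\text{ for }|\nu|\ge 1\}=\frac{\pi}{2}$; the existence of such a $g$ and the value of $c$ are exactly the facts already invoked in the proof of Theorem \ref{mainappl} (cf.\ \cite{SS}). Writing $\EE_A(\omega)\EE_B(\Omega)$ as the double operator integral $\int_\omega\int_\Omega d\EE_A(\lambda)\,d\EE_B(\mu)$ and inserting the trivial identity $1=\frac{1}{\lambda-\mu}(\lambda-\mu)$, which is legitimate because $|\lambda-\mu|\ge d$ on $\omega\times\Omega$, one substitutes $\frac{1}{\lambda-\mu}=\int_\R e^{-is(\lambda-\mu)}g(s)\,ds$, interchanges the order of integration (justified since $g\in L^1(\R)$ and the spectral integrals are norm bounded), and distributes the factors $e^{-is\lambda}$ and $e^{is\mu}$ onto the spectral measures of $A$ and $B$, exactly as in the computation leading to \eqref{trans}. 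This yields
\[
 \EE_A(\omega)\EE_B(\Omega)=\int_\R g(s)\,\EE_A(\omega)\,e^{-isA}(A-B)\,e^{isB}\,\EE_B(\Omega)\,ds\,.
\]
Since $e^{\pm isA}$ and $e^{\pm isB}$ are unitary, passing to norms under the integral sign gives
\[
 \norm{\EE_A(\omega)\EE_B(\Omega)}\le\norm{g}_{L^1(\R)}\,\norm{A-B}=\frac{\pi}{2d}\,\norm{A-B}\,,
\]
which is the first inequality.

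For the sharpened bound I would treat first the subordinated case, say $\sup\Omega\le\inf\omega$, so that $\lambda-\mu\ge d$ for every $\lambda\in\omega$ and $\mu\in\Omega$. Then in the representation above it suffices that $\widehat g(\nu)=1/\nu$ hold only on the half-line $[d,\infty)$, and the relevant one-sided extremal constant $c_1=\inf\{\norm{g}_{L^1(\R)}:\widehat g(\nu)=1/\nu\text{ for }\nu\ge 1\}$ equals $1$: the lower bound $c_1\ge 1$ is immediate from $\norm{g}_{L^1(\R)}\ge\sup_{\nu\ge1}|\widehat g(\nu)|$, while its attainment in the limit is part of McEachin's analysis in \cite{Mc}. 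Rerunning the estimate with such a one-sided kernel then gives $\norm{\EE_A(\omega)\EE_B(\Omega)}\le\norm{A-B}/d$. The remaining case, in which the convex hull of $\omega$ does not meet $\Omega$ (or vice versa), one reduces to the subordinated one by splitting $\Omega=\Omega_-\cup\Omega_+$ into its parts below $\inf\omega$ and above $\sup\omega$ — each of which forms a subordinated pair with $\omega$ of separation at least $d$ — and then recombining the two contributions; this last step, carried out so that the constant $1$ is not lost, is the content of \cite{Mc}, to which I refer for the details.

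The two ingredients that require genuine work are the identification of the extremal $L^1$-constants ($c=\frac{\pi}{2}$ for the kernel $1/\nu$ on $\{|\nu|\ge 1\}$ and $c_1=1$ for $1/\nu$ on $\{\nu\ge 1\}$) and, in the refined statement, the recombination of the subordinated pieces without enlarging the constant; both are supplied by \cite{Mc}. Everything else — the Fubini interchange, the factorization of the exponentials through the spectral measures, and the passage to norms under the integral — is routine and runs exactly parallel to the manipulations already performed in the text for \eqref{trans}.
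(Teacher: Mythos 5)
The paper does not actually prove Proposition~\ref{mcen} --- it is quoted directly from McEachin~\cite{Mc} --- so there is no internal proof to compare against. Your derivation of the first inequality is nevertheless correct and runs in the same spirit as the Sz.-Nagy-kernel computation the authors use to obtain \eqref{better}: the operator $Q=\EE_A(\omega)\EE_B(\Omega)$ satisfies the Sylvester identity $AQ-QB=\EE_A(\omega)(A-B)\EE_B(\Omega)$, and inserting the Fourier representation $\widehat g(\nu)=1/\nu$ for $|\nu|\ge d$ together with the unitarity of $e^{\pm isA}$, $e^{\pm isB}$ gives $\|Q\|\le\|g\|_{L^1(\R)}\,\|A-B\|$ with $\|g\|_{L^1(\R)}=\pi/(2d)$ from \cite{SS}. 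This part is sound.

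The second half has two genuine gaps. First, the claim that $c_1:=\inf\{\|g\|_{L^1(\R)}\,:\,\widehat g(\nu)=1/\nu,\ \nu\ge1\}=1$ is asserted but not established: the lower bound is trivial, but no $L^1$ construction with norm approaching $1$ is given, and the one-sided kernel that actually realizes the constant $1$, namely $\chi_{(0,\infty)}$, is not in $L^1(\R)$. In the subordinated case the sharp constant is normally obtained not from an $L^1$ Fourier kernel but from the Laplace-type representation $Q=\int_0^\infty e^{-s(A-a)}\,C\,e^{s(B-a)}\,ds$ (with $a$ chosen so that $A|_{\Ran\EE_A(\omega)}\ge a+d$ and $B|_{\Ran\EE_B(\Omega)}\le a$), where the factor $1/d$ comes from the exponential decay $\|e^{-s(A-a)}\EE_A(\omega)\|\le e^{-sd}$ rather than from an $L^1$-norm bound. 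Second, and more seriously, the reduction of the annular case to the subordinated one by splitting $\Omega=\Omega_-\cup\Omega_+$ cannot be completed without losing a constant: since $\EE_B(\Omega_-)$ and $\EE_B(\Omega_+)$ are mutually orthogonal projections one obtains $TT^*=T_-T_-^*+T_+T_+^*$ with $T_\pm=\EE_A(\omega)\EE_B(\Omega_\pm)$, hence only $\|T\|\le\bigl(\|T_-\|^2+\|T_+\|^2\bigr)^{1/2}\le\sqrt2\,\|A-B\|/d$, and there is no elementary recombination that removes the $\sqrt2$. You defer precisely these two points to \cite{Mc}, which is appropriate as a citation, but as written the second inequality is not proved; it is only reduced to the statement of the result you are trying to establish.
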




Now we are able to prove Theorem \ref{mainappl}.


\begin{proof}[Proof of Theorem \ref{mainappl}] By Proposition \ref{mcen},
\begin{equation}\label{p1}
\dist (\omega_t,\Omega_s)\|P_tP_s^\perp\|\le \frac\pi2 \|B_t-B_s\|, \quad s,t\in I,
\end{equation}
and
\begin{equation}\label{p2}
\dist (\omega_s,\Omega_t)\|P_t^\perp P_s\|\le \frac\pi2 \|B_t-B_s\|, \quad s,t\in I.
\end{equation}
Since
$$
\|P_s-P_t\|=\max\left \{\|P_tP_s^\perp\| , \|P_t^\perp P_s^\perp\|
\right \},
$$
from \eqref{p1} and \eqref{p2} it follows  that
$$
\min\left \{
\dist (\omega_t,\Omega_s), \dist (\omega_s,\Omega_t)
\right \} \|P_s-P_t\|\le \frac\pi2 \|B_s-B_t\|,\quad s,t \in I.
$$
Dividing both sides of this inequality by $\abs{s-t}$  and  letting  $s$ approach $t$,
one obtains the bound
$$
\dist(\omega_t,\Omega_t)\|\dot P_t\|\le \frac\pi2\|\dot B_t\|, \quad t\in I\,,
$$
where we have used Lemma \ref{lem:specDistCont} and the smoothness of the path $I\ni t\mapsto P_t$
(cf.\ Appendix \ref{app:proofSmth}).

Since $\dist(\omega_t,\Omega_t)>0$ for all $t\in I$ by hypothesis,
one obtains that
$$
\|\dot P_t\|\le \frac\pi2 \frac{\|\dot B_t\|}{\dist(\omega_t,\Omega_t)}, \quad t\in I,
$$ and then
applying Lemma \ref{lem:resPiecSmth} completes the proof.
\end{proof}

\section{Proof of the smoothness of the spectral projections}\label{app:proofSmth}

The proof of the smoothness of the path of projections $P_t$
required for the alternative proof  of Theorem \ref{mainappl} in
Appendix C is essentially the same as the one presented in
\cite[Theorem II.5.4]{Kato} for the continuous case.

\begin{lemma}\label{lem:specProjSmooth}
    Under the assumptions of Theorem \ref{mainappl}, $I\ni t\mapsto P_t$ is a $C^1$-smooth path.
    \begin{proof}
        Let $t\in I$ and $\varepsilon=\frac{1}{4}\dist(\omega_t,\Omega_t)>0$.
        Due to the fact, that the families $\{\omega_s\}_{s\in I}$ and $\{\Omega_s\}_{s\in I}$ are upper semicontinuous,
        there is a $\delta>0$ such that
        \begin{equation}\label{eq:specPropII}
            \omega_s\subset \cO_{\varepsilon/2}(\omega_t)\ \text{ and }\ \Omega_s\subset\cO_{\varepsilon/2}
            (\Omega_t)\quad \text{ for all}\quad s\in I\,,\ \abs{s-t}<\delta\,,
        \end{equation}
        where $\cO_{\eps}(\Delta)$ denotes the open $\eps$-neighborhood of $\Delta\subset\R$.

        In particular, $\cO_\varepsilon(\omega_t)\setminus \cO_{\varepsilon/2}(\omega_t)$ lies in the resolvent set of $B_s$
        for all $s\in I\,,\ \abs{s-t}<\delta$. Therefore, there exists a finite number of rectifiable, simple closed
        positive orientated curves belonging to $\C\setminus\spec(B_s)$ for all $s\in I\,, \abs{s-t}<\delta$,
        such that $\omega_s$ is contained in the union of their interiors and $\Omega_s$ lies in the
        union of their exteriors. Let $\Gamma$ denote the union of these curves.
        As in \cite[(III.6.19)]{Kato}, $P_s$ has the representation
        \begin{equation*}
            P_s = \frac{1}{2\pi \ii} \int_\Gamma R_s(\zeta) \,d\zeta\,,\quad
            R_s(\zeta) := \bigl(\zeta I_\cH - B_s\bigr)^{-1}\,,\quad s\in I\,,\ \abs{s-t}<\delta\,.
        \end{equation*}
        Since $B_s-B_t = (\zeta I_\cH - B_t)-(\zeta I_\cH - B_s)$, it is
        \[
            R_s(\zeta)-R_t(\zeta) = R_t(\zeta)\bigl(B_s - B_t\bigr)R_s(\zeta)\,,\quad s\in I\,,\ \abs{s-t}<\delta\,.
        \]
        Furthermore, for all $\zeta\in\Gamma\subset\C\setminus\spec(B_s)$ the equation
        \begin{equation}\label{eq:NormResolvent}
            \|{R_s(\zeta)}\|=\frac{1}{\dist(\zeta,\spec(B_s))}
        \end{equation}
        holds (cf. \cite[(V.3.16)]{Kato}). Due to \eqref{eq:specPropII}, this implies that
        $\|{R_s(\zeta)}\|$ is uniformly bounded for $\zeta\in\Gamma$ and $s\in I\,,\ \abs{s-t}<\delta$, from which
        one concludes, that $\frac{R_s(\zeta)-R_t(\zeta)}{s-t}$ converges uniformly
        to $R_t(\zeta)\dot B_t R_t(\zeta)$ for $\zeta\in\Gamma$ as $s$ goes to $t$.
        This shows that
        \[
            \dot P_t = \lim_{s\to t}\frac{P_s-P_t}{s-t}=\frac{1}{2\pi\ii}\int_\Gamma
            R_t(\zeta)\dot B_t R_t(\zeta)\,d\zeta
        \]
        exists. By a similar argument, one concludes that $I\ni t\mapsto \dot P_t$ is continuous and, therefore, $I\ni t\mapsto P_t$ is
        $C^1$-smooth.
    \end{proof}%
\end{lemma}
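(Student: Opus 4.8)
The plan is to follow the classical Riesz-projection argument of Kato \cite[Theorem~II.5.4]{Kato}, upgraded from the continuous to the $C^1$ setting: one writes $P_t$ as a contour integral of the resolvent over a fixed contour and differentiates under the integral sign.

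First I would fix $t_0\in I$ and set $\eps=\frac{1}{4}\dist(\omega_{t_0},\Omega_{t_0})>0$. Since the families $\{\omega_s\}_{s\in I}$ and $\{\Omega_s\}_{s\in I}$ are upper semicontinuous, there is a $\delta>0$ such that $\omega_s\subset\cO_{\eps/2}(\omega_{t_0})$ and $\Omega_s\subset\cO_{\eps/2}(\Omega_{t_0})$ for all $s\in I$ with $|s-t_0|<\delta$; in particular the annular region $\cO_\eps(\omega_{t_0})\setminus\overline{\cO_{\eps/2}(\omega_{t_0})}$ lies in the resolvent set of every such $B_s$. Hence one can choose a finite family $\Gamma$ of rectifiable, positively oriented, simple closed curves inside that annular region which encircles $\omega_s$ and leaves $\Omega_s$ in its exterior for all $s$ with $|s-t_0|<\delta$, and obtain the representation
\[
 P_s=\frac{1}{2\pi\ii}\int_\Gamma R_s(\zeta)\,d\zeta\,,\qquad R_s(\zeta):=(\zeta I_\cH-B_s)^{-1}\,,\quad |s-t_0|<\delta\,.
\]

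Next I would invoke the resolvent identity $R_s(\zeta)-R_{t_0}(\zeta)=R_{t_0}(\zeta)(B_s-B_{t_0})R_s(\zeta)$ together with the self-adjointness formula $\norm{R_s(\zeta)}=1/\dist(\zeta,\spec(B_s))$. The above inclusions guarantee that $\dist(\Gamma,\spec(B_s))$ is bounded below, so $\norm{R_s(\zeta)}$ is uniformly bounded for $\zeta\in\Gamma$ and $|s-t_0|<\delta$; this is exactly where upper semicontinuity is used. Combining this with the local Lipschitz continuity of $s\mapsto B_s$ (a consequence of $C^1$-smoothness), the difference quotient $\frac{R_s(\zeta)-R_{t_0}(\zeta)}{s-t_0}$ converges in operator norm, uniformly in $\zeta\in\Gamma$, to $R_{t_0}(\zeta)\dot B_{t_0}R_{t_0}(\zeta)$ as $s\to t_0$. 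Passing to the limit under the integral shows that $\dot P_{t_0}$ exists and equals $\frac{1}{2\pi\ii}\int_\Gamma R_{t_0}(\zeta)\dot B_{t_0}R_{t_0}(\zeta)\,d\zeta$. Finally, reusing the same contour $\Gamma$ on the neighborhood $|s-t_0|<\delta$ and the continuity of $s\mapsto B_s$, $s\mapsto\dot B_s$ and $s\mapsto R_s(\zeta)$ (uniform in $\zeta\in\Gamma$), the same estimates yield continuity of $s\mapsto\dot P_s$ at $t_0$; since $t_0\in I$ was arbitrary, $I\ni t\mapsto P_t$ is $C^1$-smooth.

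The only genuine work is the bookkeeping in the second step: arranging a single contour $\Gamma$ that is valid simultaneously for all $s$ in a neighborhood of $t_0$, and verifying the uniform resolvent bounds along it. Once that is in place, differentiation under the integral sign and the continuity of the derivative are entirely routine, and the argument is a direct transcription of Kato's proof in the continuous case.
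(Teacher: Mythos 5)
Your proposal is correct and follows essentially the same argument as the paper: fix a reference point, use upper semicontinuity of the spectral components to construct a single contour $\Gamma$ valid on a neighborhood, apply the Riesz projection formula together with the resolvent identity and the self-adjointness bound $\norm{R_s(\zeta)}=1/\dist(\zeta,\spec(B_s))$, differentiate under the integral sign, and conclude continuity of $\dot P_t$ by the same estimates.
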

\end{appendix}

\end{document}